\newtheorem{thm}{Theorem}
\newtheorem{prop}[thm]{Proposition}
\newtheorem{lem}[thm]{Lemma}
\theoremstyle{remark}
\newtheorem{rem}[thm]{Remark}
\theoremstyle{definition}
\newcommand{\C}{\mathbb C}
\newcommand{\R}{\mathbb R}
\newcommand{\HH}{\mathbb H}
\newcommand{\T}{\mathcal T}
\newcommand{\I}{\mathrm{I}}
\newcommand{\id}{\mathrm{id}}
\newcommand{\la}{\lambda}
\newcommand{\g}{\gamma}
\newcommand{\s}{\sigma}
\newcommand{\MCG}{\mathrm{MCG}}
\newcommand{\pt}{\mathrm{Pt}}
\newcommand{\sh}[1]{\mathrm{Sh}(\T(#1))}
\newcommand{\wt}[1]{\widetilde{#1}}
\newcommand{\PPt}{\mathfrak{Pt}}
\renewcommand{\phi}{\varphi}
\title[]
{Ptolemy groupoids, shear coordinates and the augmented Teichm\"uller space}
\author{Julien Roger}
\address{Department of Mathematics, Rutgers University, New Brunswick NJ~08854}
\email{juroger@math.rutgers.edu}
\urladdr{math.rutgers.edu/$\sim$juroger}
\thanks{This research was partially supported by the grant DMS-1207832 from the National Science Foundation.}
\begin{document}

\begin{abstract}
We start by describing how ideal triangulations on a surface degenerate under pinching of a multicurve. We use this process to construct a homomorphism from the Ptolemy groupoid of a surface to that of a pinched surface which is natural with respect to the action of the mapping class group. We then apply this construction to the study of shear coordinates and their extension to the augmented Teichm\"uller space. In particular, we give an explicit description of the action of the mapping class group on the augmented Teichm\"uller space in terms of shear coordinates.
\end{abstract}

\maketitle


An \emph{ideal triangulation} $\la$ of a punctured surface $S$ consists of a maximal collection of isotopy classes of arcs connecting the punctures of $S$, which decompose the surface into (ideal) triangles. The goal of this paper is to study how ideal triangulations behave under pinching of curves on surfaces and describe some applications to the study of Teichm\"uller space, the Ptolemy groupoid and the action of the mapping class group on them.

Given a multicurve $\g=\cup^l_{i=1}\g_i$ in $S$, that is, a disjoint union of simple closed curves which are not homotopic to each other, not null-homotopic or homotopic to a puncture, we consider the subsurface $S_\g=S\smallsetminus\g$, which can be thought of as being obtained from $S$ by pinching the components of $\g$ to points and removing them. The key observation is the following: given an ideal triangulation $\la$ of $S$, there is a natural way of obtaining an ideal triangulation of $S_\g$, which we call the \emph{induced ideal triangulation} $\la_\g$. In practice, $\la_\g$ is obtained from $\la$ by restricting its edges to $S_\g$ and grouping the segments obtained into isotopy classes (see Figure~\ref{example2} for an example).

\begin{figure}[htb!]
\includegraphics{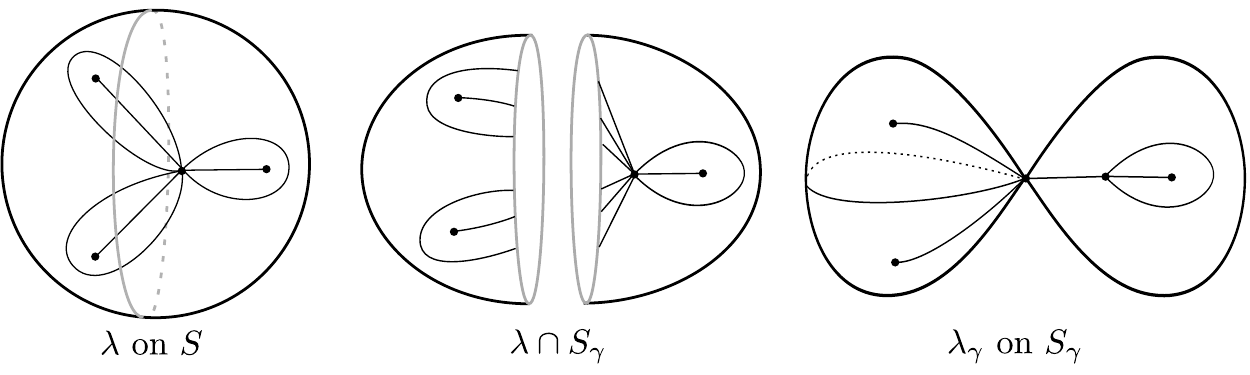}
\caption{\label{example2} An example on the four punctured sphere}
\end{figure}
This construction is in essence a 2--dimensinal version of the crushing of ideal triangulations of 3--manifolds along normal surfaces introduced by W. Jaco and J. Rubinstein in \cite{JaRu}.

In the first part of this paper we describe how this process translates in terms of the \emph{Ptolemy groupoid} $\pt(S)$ \cite{Penner1} with set of objects given by ideal triangulations and generators given by \emph{diagonal exchanges} (see Figure~\ref{diagexch}).

\begin{thm}
\label{intro morph}
Given any multicurve $\g$ on $S$, the assignment $\la\mapsto\la_\g$ extends naturally to a homomorphism of groupoids
\[\pi_\g\colon\pt(S)\to\pt(S_\g).\]
\end{thm}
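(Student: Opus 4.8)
The plan is to use the standard presentation of the Ptolemy groupoid: $\pt(S)$ is generated by the diagonal exchanges (together with the relabelings of the edges of a triangulation), subject to the pentagon relation, the commutation relation for disjoint exchanges, and the involution relation expressing that a diagonal exchange performed twice is a relabeling. To produce a groupoid homomorphism $\pi_\g$ it therefore suffices to prescribe the image on objects, namely $\la\mapsto\la_\g$, to prescribe a morphism of $\pt(\Sg)$ for each generator, and to check that the images of the three families of relations hold in $\pt(\Sg)$. That $\la_\g$ is a genuine ideal triangulation of $\Sg$ is the content of the discussion of the induced triangulation above, so the object map is already well defined up to isotopy.

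Next I would define the image of a diagonal exchange. Let $W_e\colon\la\to\la'$ be the exchange along the edge $e$, supported in the quadrilateral $Q$ formed by the two triangles of $\la$ adjacent to $e$, and let $e'$ be the new diagonal. Cutting along $\g$ turns $Q\cap\Sg=Q\smallsetminus\g$ into a disjoint union of ideal polygons, whose boundary arcs (the four sides of $Q$ and the arcs of $\g\cap Q$) belong to both $\la_\g$ and $\la'_\g$; the two induced triangulations differ only by the segments of $e$ versus the segments of $e'$ inside this region. Since both families triangulate the same union of polygons with the same boundary, they are joined by a sequence of diagonal exchanges supported in $Q\smallsetminus\g$, and I define $\pi_\g(W_e)$ to be the corresponding morphism of $\pt(\Sg)$ (composed with the evident relabeling); when $\g$ misses $Q$ this is literally the single exchange $W_e$. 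The key point making this assignment canonical is that the flip complex of a union of ideal polygons --- whose $2$--cells are exactly the pentagons and commutation squares --- is simply connected (the associahedron is contractible), so the chosen sequence represents a unique morphism independent of all choices.

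Finally I would verify the relations, and this locality is what makes the verification uniform rather than case-by-case in the topology of $\Sg$. Each defining relation of $\pt(S)$ is supported in a fixed subsurface $R\subset S$ --- a pentagon for the pentagon relation, a disjoint pair of quadrilaterals for commutation, a single quadrilateral for the involution. Applying $\pi_\g$ to such a relation produces a closed sequence of diagonal exchanges of $\Sg$ all supported in $R\smallsetminus\g$, which is again a disjoint union of ideal polygons; invoking once more the simple connectivity of the polygonal flip complex, every such closed sequence is the identity, so the relation is preserved. The same intrinsic, cut-and-restrict description of $\pi_\g$ shows that it commutes with the action of homeomorphisms, which will give the naturality with respect to the mapping class group.

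The main obstacle, I expect, is not the final relation-checking --- which the simple connectivity of the associahedron reduces to a formality --- but rather establishing the locality statements cleanly: that $\la_\g$ and $\la'_\g$ genuinely agree outside $Q\smallsetminus\g$, that $Q\smallsetminus\g$ is a union of disks for every isotopy type of $\g\cap Q$ (including when a triangle is glued to itself or when $\g$ runs parallel to an edge), and that the relabelings are tracked consistently through the segments so that composition is respected. The degenerate configurations of $\g$ relative to the flipped quadrilateral are where the bookkeeping is most delicate.
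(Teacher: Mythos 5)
Your outline is workable and genuinely different from the paper's proof, but the step you postpone to the end is not peripheral bookkeeping: it is essentially the entire content of the paper's argument. The paper first proves a dichotomy (Lemma~\ref{diagkilling}): if no component of $\g$ crosses two parallel sides of the square $Q$ in succession, then $\la_\g$ and $\la'_\g$ differ by a \emph{single} diagonal exchange, while if some component does, then $\la_\g=\la'_\g$. Granting this, $\pi_\g([\la,\la'])=[\la_\g,\la'_\g]$ is a generator or an identity, and Theorem~\ref{homom} follows from a short case count: a bigon relation maps to a bigon relation or an identity, a square relation to a square, bigon or identity, and a pentagon relation to a pentagon, bigon or identity, according to how many of the squares involved are crossed by $\g$. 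Your proposal replaces this dichotomy by the assertion that $\la_\g$ and $\la'_\g$ ``triangulate the same union of ideal polygons $Q\smallsetminus\g$ with the same boundary'' and hence are joined by flips supported there, canonically via the associahedron. As stated this is not correct: the components of $Q\smallsetminus\g$ become ideal polygons of $\Sg$ only after their $\g$--sides are collapsed to the new punctures, and under this collapse the segments of the old diagonal $e$ (and of the new one $e'$) typically become boundary--parallel --- each cobounds a collapsed bigon with a segment of a side of $Q$, that is, with an edge you are treating as part of the fixed boundary. In particular, when $\g$ crosses $Q$ the correct conclusion is that there is nothing to flip at all: $\la_\g=\la'_\g$, and your picture of two honest interior triangulations of a fixed polygon connected through the associahedron never materializes. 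So the ``locality statements'' you flag as the residual obstacle are exactly Lemma~\ref{diagkilling}; your framework presupposes that bigon--collapse analysis rather than replacing it.

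Two further remarks. First, once one knows that each $\la_\g$ is an ideal triangulation of $\Sg$, both the canonicity of your assignment and the verification of relations are automatic, with no associahedron needed: by the two standard facts recalled in the paper (connectivity and simple connectivity of $\PPt(\Sg)$), there is exactly one morphism $[\mu,\mu']$ in $\pt(\Sg)$ between any two objects, so \emph{any} map on objects extends uniquely to a groupoid homomorphism, and every closed loop of flips in $\pt(\Sg)$ is trivial. Your appeal to simple connectivity of the polygonal flip complex is a local instance of a global fact already available, and it carries the extra burden (which you acknowledge) that polygons embedded in $\Sg$ may have identified sides; also note that in this paper triangulations are unlabeled, so there are no relabeling generators to track. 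Second, what this formal argument --- and your construction --- does not deliver is the refinement the paper actually uses afterwards: that generators go to generators or identities. That is what makes $\pi_\g$ cellular, i.e.\ induced by a map $\Pi_\g\colon\PPt(S)\to\PPt(\Sg)$, what identifies $\ker\pi_\g$ as generated by exchanges in squares crossed by $\g$, and what feeds into Theorem~\ref{tower}. For those consequences you need the dichotomy of Lemma~\ref{diagkilling}, not merely flip--connectivity of polygon triangulations.
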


The (pure) mapping class group $\MCG(S)$ acts on the set of ideal triangulations so that, for any ideal triangulation $\la$, there is a natural homomorphism $\Phi_\la\colon\MCG(S)\to\pt(S)$. Pinching in terms of mapping class groups can be understood via the \emph{cutting homomorphism} $\zeta$ and the associated exact sequence
\[1\to<T_{\g_1},\ldots,T_{\g_l}>\to\MCG(S,\g)\stackrel{\zeta}{\rightarrow}\MCG(S_\g)\]
where the  $T_{\g_i}$'s denote the Dehn twists along the components of $\g$. Here $\MCG(S,\g)$ denotes the stabilizer of $\g$ in $\MCG(S)$ and $\zeta$ associates to each mapping class in the stabilizer its restriction to $S_\g$. As a direct consequence of Theorem~\ref{intro morph}, we obtain the following result.
\begin{thm}
The following diagram of exact sequences is commutative.
\begin{align*}
\label{diagram}
\xymatrix{1\ar[r]&{<T_{\g_1},\ldots,T_{\g_n}>}\ar[d]_{\Phi_{\la|}}\ar[r]&{\MCG(S,\g)}\ar[d]_{\Phi_\la}\ar[r]^{\zeta}&{\MCG(S_\g)}\ar[d]^{\Phi_{\la_\g}}\\
1\ar[r]&{\ker\pi_\g}\ar[r]&{\pt(S)}\ar[r]_{\pi_\g}&{\pt(S_\g)}
}
\end{align*}
\end{thm}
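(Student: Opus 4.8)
The plan is to derive the whole diagram from Theorem~\ref{intro morph}; the only square with genuine content is the right-hand one, and the left-hand square together with the exactness of the two rows will then follow formally. The one tool I need beyond the bare functor $\pi_\g$ is the naturality asserted in Theorem~\ref{intro morph}: for $\phi\in\MCG(S,\g)$ the homeomorphism $\phi$ preserves $\g$, hence restricts to a representative of $\zeta(\phi)\in\MCG(S_\g)$, and $\pi_\g$ intertwines the induced action of $\phi$ on $\pt(S)$ with that of $\zeta(\phi)$ on $\pt(S_\g)$. On objects this equivariance reads
\[(\phi(\la))_\g=\zeta(\phi)(\la_\g),\]
which is a purely topological statement: the construction $\la\mapsto\la_\g$ depends only on the isotopy class of $\la$ relative to $\g$ and is manifestly natural with respect to homeomorphisms preserving $\g$.

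For the right-hand square I would simply unwind the definitions. Recall that $\Phi_\la(\phi)$ is the morphism of $\pt(S)$ representing $\phi$, running from $\la$ to $\phi(\la)$, while $\Phi_{\la_\g}(\zeta(\phi))$ runs from $\la_\g$ to $\zeta(\phi)(\la_\g)$. Applying the functor $\pi_\g$ to $\Phi_\la(\phi)$ yields a morphism from $\la_\g$ to $(\phi(\la))_\g$, which by the displayed identity has the same source and target as $\Phi_{\la_\g}(\zeta(\phi))$. Equivariance of $\pi_\g$ on the generating diagonal exchanges then shows that it carries the flip sequence representing $\phi$ to the flip sequence representing $\zeta(\phi)$, so the two morphisms coincide; this is precisely the identity $\pi_\g\circ\Phi_\la=\Phi_{\la_\g}\circ\zeta$.

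The rest is formal. By exactness of the top row, $\zeta(T_{\g_i})=\id$ for each component, so the commutativity just proved gives $\pi_\g(\Phi_\la(T_{\g_i}))=\Phi_{\la_\g}(\id)=\id_{\la_\g}$; hence $\Phi_\la$ maps $<T_{\g_1},\ldots,T_{\g_l}>$ into $\ker\pi_\g$, and taking $\Phi_{\la|}$ to be this corestriction makes the left-hand square commute by construction. The top row is the cutting homomorphism sequence quoted above, and the bottom row is exact by the very definition of $\ker\pi_\g$ as the subgroupoid of morphisms that $\pi_\g$ sends to identities.

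The one step demanding real geometric input is the passage, in the right-hand square, from equality of sources and targets to equality of morphisms, namely that $\pi_\g$ sends the canonical flip sequence representing $\phi$ to the one representing $\zeta(\phi)$. I expect this to rest on the naturality already established in Theorem~\ref{intro morph} together with the essential uniqueness of morphisms between two fixed objects of the Ptolemy groupoid; the heart of the matter is that pinching intertwines the mapping class group action on $S$ fixing $\g$ with the action on $S_\g$.
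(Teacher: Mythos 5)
Your proposal is correct. For the right-hand square --- the real content --- your argument is the same as the paper's: reduce to the object-level identity $f(\la)_\g=\zeta(f)(\la_\g)$, which the paper verifies by the computation $\zeta(f)(\la\cap S_\g)=f(\la)\cap S_\g$ followed by passing to isotopy classes of segments (this is the precise content of your ``manifestly natural''), and then conclude equality of morphisms from the uniqueness of the morphism between two fixed objects of the Ptolemy groupoid. Your closing paragraph correctly identifies this uniqueness as the key mechanism; it also quietly repairs your earlier sentence claiming that $\pi_\g$ carries ``the flip sequence representing $\phi$'' to ``the flip sequence representing $\zeta(\phi)$'' --- what $\pi_\g$ actually produces is a sequence of flips and identities, not canonically any preferred sequence for $\zeta(\phi)$, but uniqueness of morphisms makes this irrelevant. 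Where you genuinely differ from the paper is the left-hand square: the paper proves independently and geometrically that the Dehn twists $T_{\g_i}$ land in $\ker\pi_\g$, by observing that $T_{\g_i}(\la)\cap S_\g$ and $\la\cap S_\g$ are isotopic in $S_\g$; you instead deduce this formally from the already-proved right-hand square together with $\zeta(T_{\g_i})=\id$, which is part of the exactness of the cutting sequence (\ref{exactseq}) that the paper itself imports from \cite{mcg}. Your deduction is sound --- and in fact applies verbatim to every element of $<T_{\g_1},\ldots,T_{\g_l}>$, not just the generators, since all such elements are killed by $\zeta$ --- and it is more economical, making the left square a corollary of the right one; what the paper's direct argument buys is a self-contained, concrete exhibition of kernel membership that does not lean on the quoted exactness of the top row.
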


In the second part of this paper, we apply this construction to the study of the \emph{augmented Teichm\"uller space} $\overline{\T(S)}$ \cite{Abi1,Abi2,Ber1} and the action of the mapping class group on it. As a set, this augmentation is obtained by adjoining to $\T(S)$ the Teichm\"uller spaces $\T(S_\g)$ of the pinched surfaces $S_\g$ for all the multicurves $\g$. It admits a natural topology for which the action of the mapping class group on Teichm\"uller space extends continuously \cite{Abi2}. The quotient of $\overline{\T(S)}$ by this action is the Deligne-Mumford compactification of the moduli space.

Given an ideal triangulation $\la$, we consider the associated \emph{(exponential) shear coordinates} introduced by W. Thurston \cite{Thurston} and studied further by F. Bonahon \cite{Bo1}, which provide an embedding $x_\la\colon\T(S)\to\R^\la$. For any multicurve $\g$, we can also consider the shear coordinates $y_{\la_\g}\colon\T(S_\g)\to\R^{\la_\g}$ on the strata $\T(S_\g)$ and ask if they are extensions of the coordinates $x_\la$ for the topology of the augmented Teichm\"uller space.

The key ingredient is given by the map
\[\Theta_{\g,\la}\colon\R^\la\to\R^{\la_\g}\]
which associates to each edge of $\la_\g$ the products of shear parameters coming from the segments of edges of $\la$ forming its homotopy class. This map was introduced in its quantized version in \cite{moi} where we showed that it respects the Poisson structure given by the relevant Weil-Petersson bivectors \cite{fo1}. We use this map to describe how the topology of the augmented Teichm\"uller space can be recovered using shear coordinates and how the charts $x_\la$ and $y_{\la_\g}$ are related.

\begin{thm}
\label{intro ext}
Let $m_t\in\T(S)$, $t>0$, be a continuous family of hyperbolic metrics on $S$ and let $m_\g\in\T(S_\g)$. Suppose that $\lim_{t\to 0}\ell_{m_t}(\g_i)=0$ for each component of $\g$.
Then
\[m_t\xrightarrow[t\rightarrow 0]{} m_\g\text{ in }\overline{\T(S)}\Longleftrightarrow \lim_{t\to 0}\Theta_{\g,\la}(x_\la(m_t))=y_{\la_\g}(m_\g).
\]
\end{thm}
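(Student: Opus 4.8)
The plan is to prove the two implications by reducing the topology of $\overline{\T(S)}$ to the convergence of geodesic length functions and then translating this into shear coordinates via $\Theta_{\g,\la}$. I would use the standard description of the augmented topology (Abikoff, Bers; see \cite{Abi2,Ber1}): \emph{given} that $\ell_{m_t}(\g_i)\to 0$, one has $m_t\to m_\g$ in $\overline{\T(S)}$ if and only if $\ell_{m_t}(\alpha)\to\ell_{m_\g}(\alpha)$ for every simple closed curve $\alpha$ with geometric intersection number $i(\alpha,\g)=0$, i.e.\ every curve surviving in $S_\g$. Equivalently, extending $\g$ to a pants decomposition $\g\cup\g'$, the Fenchel--Nielsen length and twist parameters of the curves of $\g'$ converge to those of $m_\g$, while no condition is imposed on the twists about the $\g_i$ (consistent with the fact that twisting about $\g_i$ does not change the lengths of curves disjoint from $\g$). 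This is the characterization I would match against the right--hand side.

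Before the main argument I would record two reductions. First, the right--hand condition is independent of $\la$: by the compatibility of $\Theta$ with diagonal exchanges underlying Theorem~\ref{intro morph} (see also \cite{moi}), a change of $\la$ transforms $x_\la(m_t)$, $y_{\la_\g}(m_\g)$ and $\Theta_{\g,\la}$ by compatible mutation maps, which are homeomorphisms on the positive locus and hence preserve the limit; this lets me assume $\la$ is adapted to $\g$, with the edges crossing each $\g_i$ and the induced triangulation $\la_\g$ in a standard local position. Second, I would observe that the hypothesis $\ell_{m_t}(\g_i)\to 0$ is precisely the statement that the cusp conditions of $S_\g$ at the new punctures created by $\g$ hold in the limit: the holonomy around such a puncture is that of $\g_i$ in $m_t$, whose trace is $2\cosh(\ell_{m_t}(\g_i)/2)\to 2$, and this degeneration is equivalent to the product of the $\Theta$--parameters around the new puncture tending to $1$, i.e.\ the $S_\g$ cusp condition there. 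Consequently any accumulation point of $\Theta_{\g,\la}(x_\la(m_t))$ in $\R^{\la_\g}$ automatically lies in the image of $y_{\la_\g}$, i.e.\ is the shear coordinate of a genuine complete structure on $S_\g$.

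The geometric heart of the proof is the following identification, which I would establish for the adapted triangulation by analyzing the developing map in the degenerating collars around the $\g_i$: for every simple closed curve $\alpha\subset S_\g$, the length $\ell_{m_t}(\alpha)$ --- computed in $(S,\la)$ as the translation length of a product of shear and turning matrices over the edges of $\la$ that $\alpha$ crosses --- converges, as $t\to 0$, to the value given by the $S_\g$ shear--length formula applied to $\lim_t\Theta_{\g,\la}(x_\la(m_t))$. The mechanism is that each edge of $\la_\g$ is an isotopy class of segments of $\la$--edges, and the cross--ratio defining its shear factors as the product of the shear parameters of those segments times contributions from the crossings of $\g$; as $\g$ is pinched these crossing contributions degenerate exactly so that the surviving product is the relevant product $\Theta_{\g,\la}(x_\la(m_t))$. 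In other words, $\Theta_{\g,\la}(x_\la(m_t))$ is asymptotic to the shear coordinates, relative to $\la_\g$, of the structures obtained by restricting $m_t$ to $S_\g$.

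With this in hand both implications follow. For $(\Leftarrow)$, if $\Theta_{\g,\la}(x_\la(m_t))\to y_{\la_\g}(m_\g)$ then the identification gives $\ell_{m_t}(\alpha)\to\ell_{m_\g}(\alpha)$ for all $\alpha\subset S_\g$, which together with $\ell_{m_t}(\g_i)\to 0$ yields $m_t\to m_\g$ by the length characterization. For $(\Rightarrow)$, augmented convergence gives $\ell_{m_t}(\alpha)\to\ell_{m_\g}(\alpha)$ for a filling family of curves in $S_\g$; since $y_{\la_\g}$ is a homeomorphism onto its image and these lengths are proper coordinate--like functions, the family $\Theta_{\g,\la}(x_\la(m_t))$ remains in a compact part of the positive locus, so it has accumulation points, each lying in the image of $y_{\la_\g}$ by the cusp--condition remark and realizing the limiting length spectrum of $m_\g$; as length functions separate points of $\T(S_\g)$, the unique accumulation point is $y_{\la_\g}(m_\g)$, giving convergence. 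The main obstacle I expect is the identification of the third paragraph: one must control the hyperbolic geometry in the collars of the $\g_i$ as they degenerate to cusps and show that the individually divergent shear parameters of the edges crossing $\g$ combine, through the products defining $\Theta_{\g,\la}$, into finite limits matching the shears of $m_\g$. This cancellation of divergences, together with the properness needed for $(\Rightarrow)$, is the delicate part.
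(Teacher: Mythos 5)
Your top-level skeleton does match the paper's: the paper also reduces the augmented topology to convergence of $\ell_{m_t}(\alpha)$ for simple closed curves $\alpha$ disjoint from $\g$ (this is exactly Proposition~\ref{topology}), and then relates these lengths to the $\Theta$-values. But the entire mathematical content of the theorem lies in the step you call the ``geometric heart'' and explicitly defer: showing that the individually divergent shear parameters of edges crossing $\g$ combine, through the products defining $\Theta_{\g,\la}$, into finite limits computing lengths on $S_\g$. That step is precisely what the paper's Appendix~\ref{shearext} proves, and it is not routine: the paper writes the trace $T_{m_t}(\alpha)=2\cosh(\ell_{m_t}(\alpha)/2)$ as a sum of monomials over admissible paths in a domino diagram, introduces Bonahon's shearing cocycles $\s_m(P,Q;\kappa)$, proves Lemma~\ref{pattern} (cocycles along arcs of $\g$ separating two triangles in a left-right, resp.\ right-left, pattern diverge to $+\infty$, resp.\ $-\infty$, as $\ell_{m_t}(\g)\to 0$), and then checks path-by-path that every monomial tends to $0$ except the top and bottom paths, whose labels converge to $[\Theta_{\g,\la}(x_\la(m_t))(\mu_i)]^{\pm 1/2}$. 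Asserting this cancellation of divergences, with the admission that it is ``the delicate part,'' leaves the proof with a hole exactly where the paper does its work.

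There are two further genuine problems. First, your opening reduction to an ``adapted'' triangulation is circular: $\Theta_{\g,\la}$ does \emph{not} commute exactly with the changes of coordinates $\Phi_{\la\la'}$ --- the paper states explicitly that Diagram~(\ref{noncom}) ``is not in general commutative'' (and its once-punctured-torus example in Appendix~\ref{example} confirms this), and the asymptotic commutativity you would need, Proposition~\ref{behavior}, is deduced in the paper \emph{from} Theorem~\ref{shear}, the very statement being proved. So the $\la$-independence of the right-hand condition cannot be invoked at the outset. Second, in your forward implication the properness claim --- that convergence of lengths forces $\Theta_{\g,\la}(x_\la(m_t))$ to stay in a compact subset of the positive locus --- is unsupported: bounded lengths control the $\Theta$-values only through the same unproven identification, and in a stronger subsequential form that must also rule out coordinates degenerating to $0$ or $\infty$. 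The paper avoids this entirely by citing Proposition~6 of \cite{moi} for the forward direction and proving only the converse in Appendix~\ref{shearext}.
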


The forward implication was proved in \cite{moi} and we provide a proof of the converse in Appendix~\ref{shearext}. We note that this description of the topology of the augmented Teichm\"uller space was used recently by D. \v Sari\'c \cite{Saric} to obtain a hyperbolic geometric proof  of H. Masur's result \cite{Masur1} about the extensions of the Weil-Petersson metric to this augmentation.

A key fact about this construction is that the choice of a single ideal triangulation provides coordinates on all the lower-dimensional strata at once. In this way, we can study the action of the mapping class group on the whole augmentation using a single set of coordinates and its natural extensions. First, given two ideal triangulations $\la$ and $\la'$ of $S$ and the associated change of coordinates $\Phi_{\la\la'}=x_\la\circ x^{-1}_{\la'}\colon\R^{\la'}\to\R^{\la}$ on $\T(S)$, we have an induced change of coordinates $\Phi_{\la_\g\la'_\g}=y_{\la_\g}\circ y_{\la'_\g}\colon\R^{\la'_\g}\to\R^{\la_\g}$ on $\T(S_\g)$ (these changes of coordinates are in fact only defined on some open dense subset of their domain which contains the image of $\T(S)$ as a closed subset). This assignment is well-defined and respects compositions. As a direct consequence of Theorem~\ref{intro ext}, we have the following proposition.

\begin{prop}
When approaching a stratum $\T(S_\g)$ from $\T(S)$, the composition $\Phi_{\la\la'}\circ\Theta_{\g,\la}$ tends to the change of coordinates $\Phi_{\la_\g\la'_\g}$.
\end{prop}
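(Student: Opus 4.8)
The plan is to derive the proposition directly from Theorem~\ref{intro ext}, using the fact that convergence in $\overline{\T(S)}$ is intrinsic and therefore independent of the triangulation chosen to coordinatize the approach. First I would fix a point $m_\g\in\T(S_\g)$ together with a continuous family $m_t\in\T(S)$ such that $m_t\to m_\g$ in $\overline{\T(S)}$ and $\ell_{m_t}(\g_i)\to 0$ for every component $\g_i$ of $\g$; such a family exists because $\T(S_\g)$ sits in the closure of $\T(S)$ for the topology of the augmented Teichm\"uller space, and one can build it explicitly from Fenchel--Nielsen coordinates adapted to $\g$ by letting the length parameters of the $\g_i$ tend to $0$. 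As $m_\g$ ranges over $\T(S_\g)$, its coordinates $y_{\la'_\g}(m_\g)$ sweep out the image of the stratum, so establishing the limiting identity along every such family is equivalent to identifying the two maps on the whole stratum.

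The central step is to apply Theorem~\ref{intro ext} to the \emph{same} family $m_t$ but with respect to \emph{both} triangulations. Because the left-hand condition $m_t\to m_\g$ in $\overline{\T(S)}$ does not refer to any chart, the equivalence in that theorem yields simultaneously
\[\lim_{t\to 0}\Theta_{\g,\la}(x_\la(m_t))=y_{\la_\g}(m_\g)\qquad\text{and}\qquad \lim_{t\to 0}\Theta_{\g,\la'}(x_{\la'}(m_t))=y_{\la'_\g}(m_\g).\]
I would then substitute the two defining relations of the changes of coordinates, namely $x_\la(m_t)=\Phi_{\la\la'}(x_{\la'}(m_t))$ on $\T(S)$ and $y_{\la_\g}(m_\g)=\Phi_{\la_\g\la'_\g}(y_{\la'_\g}(m_\g))$ on $\T(S_\g)$. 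Chaining these together gives
\[\lim_{t\to 0}\bigl(\Theta_{\g,\la}\circ\Phi_{\la\la'}\bigr)(x_{\la'}(m_t))=y_{\la_\g}(m_\g)=\Phi_{\la_\g\la'_\g}\bigl(y_{\la'_\g}(m_\g)\bigr)=\Phi_{\la_\g\la'_\g}\Bigl(\lim_{t\to 0}\Theta_{\g,\la'}(x_{\la'}(m_t))\Bigr),\]
which is exactly the statement that $\Theta_{\g,\la}\circ\Phi_{\la\la'}$ and $\Phi_{\la_\g\la'_\g}\circ\Theta_{\g,\la'}$ agree in the limit as one approaches the stratum, i.e.\ that the composition $\Theta_{\g,\la}\circ\Phi_{\la\la'}$ realizes the change of coordinates $\Phi_{\la_\g\la'_\g}$ on $\T(S_\g)$.

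The hard part is the final equality, where the limit is exchanged with the map $\Phi_{\la_\g\la'_\g}$. As the excerpt notes, $\Phi_{\la_\g\la'_\g}$ is only defined on an open dense subset of $\R^{\la'_\g}$, so I must check that the limit point $y_{\la'_\g}(m_\g)$ lies in its domain and that $\Phi_{\la_\g\la'_\g}$ is continuous there. This is precisely where the stated property that the domain contains the image of $\T(S_\g)$ as a closed subset is used: since $y_{\la'_\g}(m_\g)$ is a genuine point of $y_{\la'_\g}(\T(S_\g))$, the map $\Phi_{\la_\g\la'_\g}$ is defined and smooth at it, and the interchange of limit and map is legitimate. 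Two smaller points remain: that the conclusion does not depend on the approximating family, which is immediate since Theorem~\ref{intro ext} pins down the limits solely in terms of $m_\g$; and that $\Phi_{\la_\g\la'_\g}$ really is the correct target, which follows because the changes of coordinates on both levels are realized by paths of diagonal exchanges that correspond to one another under the groupoid homomorphism $\pi_\g$ of Theorem~\ref{intro morph}, so that $\la_\g$ and $\la'_\g$ are joined by the image of the path joining $\la$ and $\la'$.
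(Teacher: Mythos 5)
Your proposal is correct and follows essentially the same route as the paper's own proof of this statement (Proposition~\ref{behavior}): apply the forward implication of Theorem~\ref{shear} to both triangulations $\la$ and $\la'$ along a family $m_t\to m_\g$, use the identities $\Phi_{\la\la'}(x_{\la'}(m_t))=x_\la(m_t)$ and $\Phi_{\la_\g\la'_\g}(y_{\la'_\g}(m_\g))=y_{\la_\g}(m_\g)$, and pass the limit through $\Phi_{\la_\g\la'_\g}$. Your treatment is somewhat more careful than the paper's, which leaves the domain and continuity issues for the rational map $\Phi_{\la_\g\la'_\g}$ implicit; correctly noting that the composition in the proposition should be read as $\Theta_{\g,\la}\circ\Phi_{\la\la'}$ is also consistent with the paper's formulation in the body.
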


An important aspect of shear coordinates associated to ideal triangulations is that the corresponding changes of coordinates $\Phi_{\la\la'}$ are rational. As such, one can construct a natural representation of $\MCG(S)$ as rational functions on $\R^\la$ via
\[f\mapsto \Phi_{f,\la}=\I_{f,\la}\circ\Phi_{f(\la)\la}\]
where $\I_{f,\la}\colon\R^{f(\la)}\to\R^\la$ is induced by the natural identification of the edges of $\la$ and $f(\la)$. This construction was described first by R. Penner \cite{Penner1} using the closely related $\lambda$--lengths coordinates on the decorated Teichm\"uller space.

\begin{thm}
The action of $\MCG(S)$ on $\overline{\T(S)}$ is given in shear coordinates by a collection of rational maps
\[\Phi_{f,\la,\g}\colon\R^{\la_\g}\to\R^{\la_{f^{-1}(\g)}}\]
for all multicurves $\g$, induced by the changes of coordinates $\Phi_{f(\la)_\g\la_\g}$. These maps coincide with the action of $\MCG(S_\g)$ on $\T(S_\g)$ in the sense that
\[\Phi_{f,\la,\g}=\Phi_{\zeta(f),\la_\g}\]
for the cutting homomorphism $\zeta\colon\MCG(S,\g)\to\MCG(S_\g)$.
\end{thm}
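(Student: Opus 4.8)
The plan is to realize the shear-coordinate representation $f\mapsto\Phi_{f,\la}$ as the image, under shear coordinates, of the canonical morphism $\Phi_\la(f)\in\pt(S)$, and then to transport the entire picture to $S_\g$ through the homomorphism $\pi_\g$ of Theorem~\ref{intro morph}. Concretely, I would first record the pinched analogue of the defining formula $\Phi_{f,\la}=\I_{f,\la}\circ\Phi_{f(\la)\la}$: since $f$ restricts to a homeomorphism $S_{f^{-1}(\g)}\to S_\g$ carrying the induced triangulation $\la_{f^{-1}(\g)}$ to $f(\la)_\g$, it supplies an identification $\I_{f,\la,\g}\colon\R^{f(\la)_\g}\to\R^{\la_{f^{-1}(\g)}}$, and I set $\Phi_{f,\la,\g}=\I_{f,\la,\g}\circ\Phi_{f(\la)_\g\la_\g}$. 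Rationality is then immediate: $\Phi_{f(\la)_\g\la_\g}$ is a composition of the shear diagonal-exchange (mutation) formulas on $S_\g$, hence rational, and $\I_{f,\la,\g}$ merely relabels coordinates.

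Next I would prove that this map computes the continuous extension of the $f$-action to the stratum $\T(S_\g)$. Take a pinching family $m_t\to m_\g$ with $\ell_{m_t}(\g_i)\to0$; by continuity of the $\MCG$-action on $\overline{\T(S)}$ the images converge in $\T(S_{f^{-1}(\g)})$ and the curves $f^{-1}(\g_i)$ again pinch. Applying Theorem~\ref{intro ext} on both sides and combining it with the preceding proposition (which asserts that, approaching the stratum, the square formed by $\Phi_{\la\la'}$, $\Phi_{\la_\g\la'_\g}$ and the maps $\Theta_{\g,\la},\Theta_{\g,\la'}$ commutes in the limit) shows that the limiting reading of $\Phi_{f,\la}$ through $\Theta$ is exactly $\Phi_{f,\la,\g}$. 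The conceptual content is that sending a groupoid word on $S$ to shear maps and then extending by $\Theta$ agrees with pushing the word through $\pi_\g$ and reading shear maps on $S_\g$; the proposition is precisely this statement for a single diagonal exchange, and one extends it multiplicatively along the word $\Phi_\la(f)$.

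It then remains to identify $\Phi_{f,\la,\g}$ with the intrinsic action $\Phi_{\zeta(f),\la_\g}$ for $f\in\MCG(S,\g)$, where $f(\g)=\g$ forces $\la_{f^{-1}(\g)}=\la_\g$. I would unwind the right-hand side as $\Phi_{\zeta(f),\la_\g}=\I_{\zeta(f),\la_\g}\circ\Phi_{\zeta(f)(\la_\g)\la_\g}$ and invoke the naturality of the induced-triangulation construction: since $\zeta(f)$ is by definition the restriction $f|_{S_\g}$ and $\la_\g$ is the restriction of $\la$, one has $\zeta(f)(\la_\g)=f(\la)_\g$, so the two change-of-coordinate factors coincide. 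The two identification maps also agree, since both are induced by the single edge correspondence $e\mapsto f(e)=\zeta(f)(e)$. Hence $\Phi_{f,\la,\g}=\Phi_{\zeta(f),\la_\g}$. More structurally, this equality is the shear-coordinate shadow of the commutative diagram of exact sequences established above, namely $\pi_\g\circ\Phi_\la=\Phi_{\la_\g}\circ\zeta$, once one knows that the shear realization intertwines $\pi_\g$ with the $\Theta$-extension.

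The main obstacle I anticipate is the matching in the second paragraph: one must be sure that the a priori only rational map $\Phi_{f,\la}$, whose indeterminacy locus meets the boundary, genuinely restricts through $\Theta_{\g,\la}$ to a well-defined map on the stratum coordinates, and that this restriction is computed generator-by-generator by $\pi_\g$ rather than merely along one family. Controlling this requires checking that the pinching hypothesis $\ell_{m_t}(\g_i)\to0$ is preserved under the action and that the product structure of $\Theta$ is compatible with each mutation formula along the entire word representing $f$; the remaining identifications are then purely formal.
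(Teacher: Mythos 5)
Your proposal is correct and follows essentially the same route as the paper's proof: you define $\Phi_{f,\la,\g}=\I_{f,\la,\g}\circ\Phi_{f(\la)_\g\la_\g}$ exactly as the paper does, verify the extension property by taking a pinching family $m_t\to m_\g$, invoking continuity of the action, Theorem~\ref{shear} on both sides, Proposition~\ref{behavior}, and the commutation of the $\I$--maps with the $\Theta$--maps, and then deduce the identity $\Phi_{f,\la,\g}=\Phi_{\zeta(f),\la_\g}$ from $\zeta(f)(\la_\g)=f(\la)_\g$ and the agreement of the identification maps. The only (immaterial) slip is that Proposition~\ref{behavior} already holds for an arbitrary pair of triangulations, not just a single diagonal exchange, so no ``multiplicative extension along the word'' representing $f$ is needed.
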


Some of the ideas behind this article can be found in \cite{moi}. In that paper, they were used to study the behavior of the irreducible representations of the quantum Teichm\"uller space $\T^q(S)$ under pinching of multicurves. Irreducible representations of $\T^q(S)$ can be used to construct a projective vector bundle over the classical Teichm\"uller space which descends to the moduli space (see \cite{BBL} for its construction in a related setting). Using the results from \cite {moi} and the present article, we expect to show in an upcoming paper \cite{moi2} that this bundle extends to the augmented Teichm\"uller space and descends to the Deligne-Mumford compactification of the moduli space. More speculatively, there is a possibility that this could lead to the construction of a modular functor, that is, a family of finite dimensional representations of the mapping class groups which are natural under pinching.

\section{Ptolemy groupoids and mapping class group action}

\subsection{Ideal triangulations and Ptolemy groupoids}

Throughout this article $S$ will denote a closed connected oriented surface of genus $g$ with $n$ punctures such that $n\geq 1$ and $\chi(S)<0$. A $\emph{multicurve}$ $\g=\cup^l_{i=1}\g_i$ is the isotopy class of a finite union of simple closed curves $\g_i$ which are not homotopic to each other, not null-homotopic or homotopic to a puncture. We will denote by $S_\g=S\smallsetminus\g$ the surface obtained by removing from $S$ (a representative of) the multicurve $\g$. Once again, such a surface will be considered up to isotopy of $S$. Topologically, $S_\g$ is a possibly disconnected surface with two new punctures for each component of $\g$ removed. However, it is important to remember that $S_\g$ is a subsurface of $S$. As such,  if $f$ is a diffeomorphism of $S$ that is not isotopic to the identity, the surfaces $S_\g$ and $f(S_\g)=S_{f(\g)}$, though topologically equivalent, are not identified. Note also that the new punctures coming from the same component of $\g$ are implicitely paired. Finally, we 
consider the \emph{(pure) mapping class group} $\MCG(S)$ consisting of isotopy classes of orientation-preserving diffeomorphisms of $S$ which fix the punctures. With this definition, if $S_\g$ is disconnected, the elements of $\MCG(S_\g)$ cannot interchange its connected components.

An \emph{ideal triangulation} $\lambda$ of $S$ consists of a maximal collection of isotopy classes of disjoint arcs between the punctures of $S$ decomposing the surface into ideal triangles. Given our requirements for $S$, such triangulations always exist and we denote by $\Lambda(S)$ the set of ideal triangulations of $S$.

If two triangulations $\la$ and $\la'$ are identical except for one edge in a square as in Figure~\ref{diagexch}, we say that they differ by a \emph{diagonal exchange}. Note that some of the sides of the square can be identified.
\begin{figure}[htb!]
\includegraphics{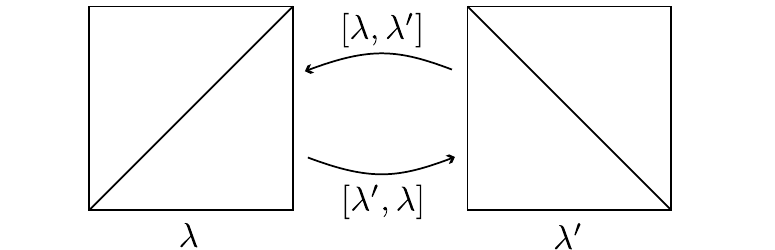}
\caption{\label{diagexch}Bigon Relation}
\end{figure}

The combinatorics of diagonal exchanges between ideal triangulations was studied first by Mosher \cite{Mo1} and analized further by Harer \cite{Har1} and Penner \cite{Penner1}. The following terminology and algebraic definition were introduced by Penner. We define the Ptolemy groupoid $\pt(S)$ over the set of objects $\Lambda(S)$ as follows: the generators are given by diagonal exchanges and we denote by $[\la,\la']$ the generator associated to a diagonal exchange from $\la'$ to $\la$ as in Figure~\ref{diagexch}. In addition, For each triangulation $\la$ we have an identity element, denoted $\id_\la$. The relations between generators are of three types:
\begin{itemize}
\item \emph{Bigon relation:} If $\la$ and $\la'$ differ by a diagonal exchange as in Figure~\ref{diagexch}, then 
\[[\la,\la'][\la',\la]=\id_\la;\]

\item \emph{Square relation:} If $\la_1$, $\la_2$, $\la_3$ and $\la_4$ differ by diagonal exchanges in non-overlapping squares as in Figure~\ref{srel}, then
\[[\la_1,\la_2][\la_2,\la_3][\la_3,\la_4][\la_4,\la_1]=\id_{\la_1};\]

\item \emph{Pentagon relation:} If $\la_1$ through $\la_5$ differ by diagonal exchanges along two overlapping squares forming a pentagon as in Figure~\ref{prel}, then
\[[\la_1,\la_2][\la_2,\la_3][\la_3,\la_4][\la_4,\la_5][\la_5,\la_1]=\id_{\la_1}.\]
\end{itemize}
Note that, thanks to the bigon relation, we will not specify orientations of edges in the figures describing relations in the Ptolemy groupoid. 
\begin{figure}[htb!]
\includegraphics{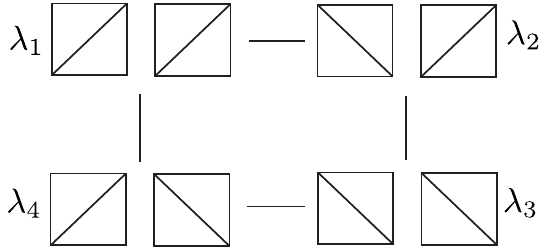}
\caption{\label{srel}Square Relation}
\end{figure}

\begin{figure}[htb!]
\includegraphics{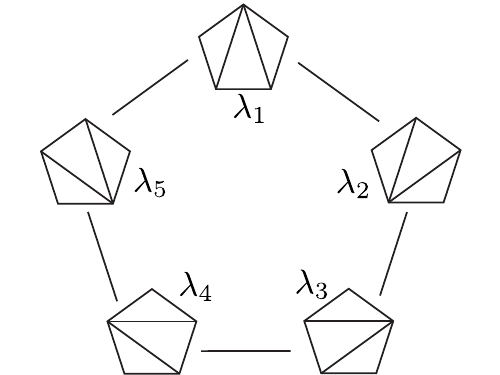}
\caption{\label{prel}Pentagon Relation}
\end{figure}

The following facts about the Ptolemy groupoid are key \cite{Har1,Penner1}:
\begin{itemize}
\item given any two ideal triangulations $\la,\la'\in\Lambda(S)$ there exists a sequence $\la'=\la_1$, $\la_2$, \ldots, $\la_n=\la$ of triangulations, such that, for any $i$, $\la_i$ and $\la_{i+1}$ differ by a diagonal exchange. We call such a sequence a \emph{path from $\la'$ to $\la$};
\item any two such sequences are equivalent modulo the relations in the Ptolemy groupoid. That is, if $\mu_1$ ,\ldots, $\mu_m$ is another path from $\la'$ to $\la$, we have
\[
[\mu_1,\mu_2][\mu_2,\mu_3]\cdots[\mu_{m-1},\mu_m]=[\la_1,\la_2][\la_2,\la_3]\cdots[\la_{n-1},\la_n]
\]
\end{itemize}

We denote by $[\la,\la']$ the element corresponding to the product of generators forming any path from $\la'$ to $\la$. The second property implies that these elements satisfy the composition relation $[\la,\la'][\la'\la'']=[\la,\la'']$. In addition, we have $[\la,\la]=\id_\la$. We use ``contravariant'' conventions since these will be related to changes of coordinates later on.

Alternatively, one can define the Ptolemy groupoid in terms of the associated CW--complex $\PPt(S)$ with vertex set $\Lambda(S)$, edges given by elementary moves and 2--cells given by the square and pentagon relations. The Ptolemy groupoid is then identified with the fundamental groupoid of $\PPt(S)$ with base points $\Lambda(S)$. In this description, the facts above correspond to $\PPt(S)$ being connected and simply connected respectively.

\subsection{Pinching surfaces and induced ideal triangulations}

We begin by recalling a construction described in \cite{moi} associating to each triangulation of $S$ a triangulation of $S_\g$.

Given an ideal triangulation $\la$ of $S$ we define the associated \emph{induced ideal triangulation} $\la_\g$ of $S_\g$ as follows: Choose a representative of $\g$ which minimizes intersections with $\la$ and denote by $\la\cap S_\g$ the family of arcs obtained from the edges of $\la$ restricted to $S_\g$. The edges of the induced triangulation $\la_\g$ are then obtained by grouping the segments in $\la\cap S_\g$ into distinct isotopy classes of arcs (see Figure~\ref{example2} for an example). Lemma~4 in \cite{moi} asserts that this is indeed an ideal triangulation. In practice, $\la_\g$ is obtained from $\la$ by collapsing each bigon in the decomposition of $S_\g$ by $\la\cap S_\g$ to arcs (see Figure~\ref{induced}). Note that, since $S$ and $S_\g$ have the same Euler characteristic, $\la$ and $\la_\g$ have the same number of edges.

This process defines a map between sets of ideal triangulations
\begin{align*}
\label{map}
\pi_\g\colon\Lambda(S)&\to\Lambda(S_\g)\\
              \la&\mapsto\la_\g
\end{align*}
which is our first object of study.

\begin{rem}
\label{triangles}
In this process, one can see that ideal triangles for $\la_\g$ on $S_\g$ are identified naturally with ideal triangles for $\la$ on $S$ (see Figure~\ref{induced}).
\begin{figure}[htb!]
\includegraphics{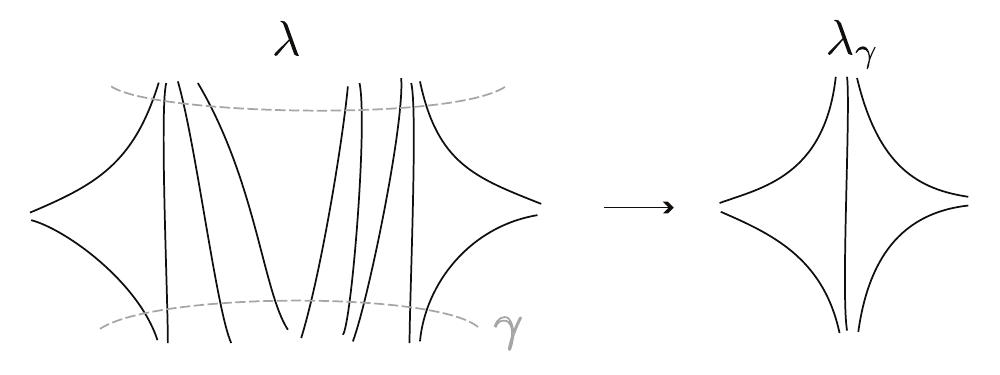}
\caption{\label{induced} Triangles before and after}
\end{figure}
\end{rem}


\subsection{A homomorphism between Ptolemy groupoids}

In the last section we defined a map $\pi_\g$ between the sets of objects of the Ptolemy groupoids $\pt(S)$ and $\pt(S_\g)$. To extend it to a map of groupoids, we start by describing the effect of pinching on diagonal exchanges.

Let $Q$ be a square (that is, the union of two distinct adjacent triangles) in an ideal triangulation $\la$. We say that a multicurve $\g$ \emph{crosses} $Q$ if at least one of its components crosses successively two parallel sides of $Q$, as in the top part of Figure~\ref{crossing}. The lower part of the figure describes the (lack of) effect that a diagonal exchange has on the pinched triangulation. Here $Q_\g$ denotes the two triangles in the pinched surface coming from those forming $Q$. The following lemma is elementary.
\begin{figure}[htb!]
\includegraphics{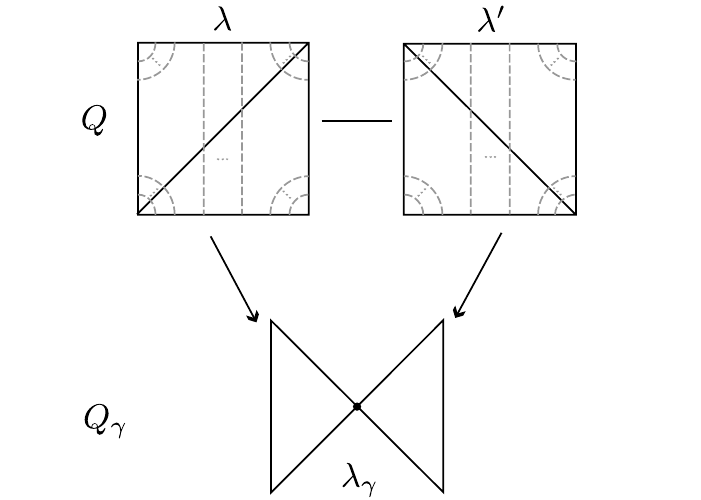}
\caption{\label{crossing} Pinching a square}
\end{figure}

\begin{lem}
\label{diagkilling}
Suppose that $\la$ and $\la'$ differ by a diagonal exchange in a square $Q$. Then
\begin{itemize}
\item if $\g$ does not cross $Q$, $\la_\g$ and $\la'_\g$ differ by a diagonal exchange;
\item if $\g$ crosses $Q$, $\la_\g$ and $\la'_\g$ are equal.
\end{itemize}
\end{lem}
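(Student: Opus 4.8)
The plan is to reduce everything to a local analysis inside $Q$ and then to compare only the edges contributed by the two diagonals. Write $Q=T_1\cup T_2$ for the two ideal triangles meeting along the exchanged edge $e$, label the ideal vertices $u,v,w,x$ cyclically so that $e=uw$, and let $e'=vx$ be the other diagonal, so that $\la'$ is obtained from $\la$ by replacing $e$ with $e'$. The four sides $uv,vw,wx,xu$ of $Q$ belong to both triangulations; only the interior diagonal changes. Since $\la$ and $\la'$ agree outside $Q$, their restrictions to $S_\g$ differ only in the segments coming from $e$ versus $e'$, and hence $\la_\g$ and $\la'_\g$ can differ only in the edges these segments produce after grouping into isotopy classes and collapsing bigons. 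To make one picture serve for both sides I would fix a representative of $\g$ minimizing intersection with the union $\la\cup\{e'\}$; such a representative is taut for $\la$ and for $\la'$ simultaneously, and the arcs of $\g\cap Q$ are then described purely in terms of the four common sides.

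Next I would classify these arcs. In taut position no arc returns to the side it came from, so each arc of $\g\cap Q$ joins two distinct sides: two adjacent sides, clipping a single ideal vertex (a corner), or two parallel sides, crossing $Q$ in the sense of Figure~\ref{crossing}. The useful reformulation is a separation criterion. An arc across a pair of parallel sides separates the two endpoints of each diagonal, whereas a corner arc separates a single vertex, and that vertex is an endpoint of exactly one of $e,e'$ (the corner $u$ or $w$ for $e$, the corner $v$ or $x$ for $e'$).

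The two cases now follow from the separation criterion. If $\g$ does not cross $Q$, every arc is a corner clip, so the endpoints of $e$ always lie on the same side of each arc; $e$ therefore descends to a single edge (shortened where a corner is clipped), and $Q_\g$ is again an ideal quadrilateral $\Omega$ whose two diagonals are exactly the images of $e$ and $e'$. Hence $\la_\g$ and $\la'_\g$ differ by the diagonal exchange in $\Omega$. If $\g$ crosses $Q$, pick a crossing arc: inside $T_1$ it runs from a side to $e$, and inside $T_2$ from $e$ to a side, so it clips the corner $u$ off $T_1$ and the corner $w$ off $T_2$, i.e. both endpoints of $e$. The segments of $e$ are thus boundary-parallel and are absorbed when we group into isotopy classes and collapse bigons (compare Remark~\ref{triangles} and Figure~\ref{induced}). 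By symmetry the same arc clips the endpoints $v,x$ of $e'$, so $e'$ is absorbed as well, and over $Q_\g$ both triangulations reduce to the same pair of triangles. Therefore $\la_\g=\la'_\g$.

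The main obstacle is the general configuration, in which $\g$ meets $Q$ in several arcs at once and the strands may belong to the same component of $\g$, so that the new punctures are paired in a way not visible inside $Q$. The single-strand pictures of Figure~\ref{crossing} capture the mechanism, and I would promote them by inducting on the number of arcs of $\g\cap Q$: an outermost corner arc merely refines the quadrilateral $\Omega$ without altering the isotopy classes of the diagonals, while an outermost crossing arc already separates their endpoints and reduces to the single-crossing model. The delicate point to verify is that the bigon collapses performed for the inner arcs never reintroduce a distinction between $e$ and $e'$; concretely, one must check that even when two parallel crossing strands bound a band $M$, the middle segments of $e$ and $e'$ in $M$ become boundary-parallel because their endpoints are identified to the same new puncture, so that a single crossing arc already forces both diagonals to be absorbed.
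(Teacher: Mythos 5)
The paper itself offers no argument for this lemma: it is declared ``elementary'', with Figure~\ref{crossing} playing the role of the proof and a remark that corner passages and multiple crossings do not change the conclusion. So your write-up formalizes exactly the local analysis the paper leaves to the reader, and its architecture is the right one: put $\g$ in taut position with respect to $\la\cup\{e'\}$ simultaneously (geodesic representatives achieve this), classify the arcs of $\g\cap Q$ into corner arcs and arcs joining opposite sides, and track which segments of $e$ and $e'$ become boundary-parallel, hence absorbed, in $S_\g$. Your Case 2 is the heart of the matter and is argued correctly: a crossing arc clips the corner $u$ off $T_1$ and the corner $w$ off $T_2$, so \emph{both} segments of $e$ are parallel to segments of the sides, symmetrically for $e'$, and since the common edges have identical segments in both computations, $\la_\g=\la'_\g$ follows.

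However, the justification you give in Case 1 is wrong as stated, and it contradicts your own classification. A corner arc at $u$ or at $w$ \emph{does} separate the endpoints of $e$ --- you say so yourself one paragraph earlier (``the corner $u$ or $w$ for $e$'') --- and it does cross $e$ once. So the sentence ``the endpoints of $e$ always lie on the same side of each arc'' is false whenever $\g$ clips the corners $u$ or $w$, and it cannot be what makes $e$ descend to a single edge. The correct criterion, which your Case 2 shows you understand, is \emph{which corners of which triangles} an arc clips: a corner arc at $u$ clips the corner $u$ of both $T_1$ and $T_2$, so it cuts off only the end segment of $e$ at $u$, which is parallel in $S_\g$ to the adjacent segments of $uv$ and $xu$, while the rest of $e$ survives; a crossing arc clips corners at \emph{opposite} ends of $e$, which is why both of its segments die. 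With that repair Case 1 goes through: the surviving middle segments of $e$ and $e'$ are the two diagonals of the square $Q_\g$ formed by the images of $T_1$ and $T_2$ (distinct by Remark~\ref{triangles}), so $\la_\g$ and $\la'_\g$ differ by that exchange. Finally, the induction in your last paragraph is unnecessary: the arcs meeting $e$ are disjoint, hence linearly ordered along $e$, and each segment of $e$ between consecutive arcs lies in a band or corner region in which it is parallel to a segment of a side --- this is precisely your band argument, and applying it segment by segment settles the many-strand case directly, with no delicate point left over.
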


Note that whether $\g$ passes through $Q$ around the corners or whether it crosses it once or multiple times has no effect on the conclusions of the lemma.

\begin{thm}
\label{homom}
The map $\pi_\g\colon\pt(S)\to\pt(S_\g)$ defined on the objects by
\[\pi_\g(\la)=\la_\g\]
and on the generators by
\[\pi_\g([\la,\la'])=[\la_\g,\la'_\g]\]
extends to a homomorphism of groupoids.
\end{thm}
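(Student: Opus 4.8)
The plan is to exploit the fact that $\pt(S)$ is presented by its generators (the diagonal exchanges $[\la,\la']$) together with the three families of relations (bigon, square, pentagon), equivalently that it is the fundamental groupoid of the connected and simply connected complex $\PPt(S)$. By the universal property of a groupoid given by generators and relations, the assignment on objects $\la\mapsto\la_\g$ together with the assignment on generators $[\la,\la']\mapsto[\la_\g,\la'_\g]$ extends to a (necessarily unique) homomorphism $\pi_\g$ precisely when (i) the generator assignment is well defined and (ii) it carries each defining relation of $\pt(S)$ to a relation holding in $\pt(S_\g)$. Step (i) is immediate from Lemma~\ref{diagkilling}: the image $[\la_\g,\la'_\g]$ is either a genuine diagonal exchange (when $\g$ does not cross the square $Q$) or, since then $\la_\g=\la'_\g$, the identity $\id_{\la_\g}$ (when $\g$ crosses $Q$). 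In particular every generator is sent to a generator or to an identity. Since each relation is supported in an embedded square or pentagon region $R$, whether and how each factor degenerates is governed entirely by the restriction of $\g$ to $R$, so the verification of (ii) is local and reduces to a finite case analysis according to which of the squares involved are crossed by $\g$.

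For the bigon relation $[\la,\la'][\la',\la]=\id_\la$, if $\g$ does not cross $Q$ we recover the bigon relation in $\pt(S_\g)$, and if $\g$ crosses $Q$ both factors become $\id_{\la_\g}$ and the relation holds trivially. For the square relation, which involves diagonal exchanges in two non-overlapping squares $Q$ and $Q'$, I would split into four cases. When $\g$ crosses neither square, pinching is local and $Q_\g,Q'_\g$ remain non-overlapping squares, so the image is again a square relation. When $\g$ crosses exactly one of them, say $Q$ (carrying the exchanges $[\la_1,\la_2]$ and $[\la_3,\la_4]$), those two factors collapse to identities and the endpoints are identified, $\la_{1,\g}=\la_{2,\g}$ and $\la_{3,\g}=\la_{4,\g}$, so the product reduces to $[\la_{1,\g},\la_{3,\g}][\la_{3,\g},\la_{1,\g}]=\id_{\la_{1,\g}}$, a bigon relation (valid because the surviving exchange in $Q'$ genuinely relates $\la_{1,\g}$ and $\la_{3,\g}$). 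When $\g$ crosses both squares all four factors are identities and the relation is trivial.

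The pentagon relation is where the work lies, and I expect it to be the main obstacle. Here the two squares overlap, so unlike the square relation the crossing conditions for the five exchanges interact and cannot be treated independently. I would enumerate the configurations of $\g$ inside the pentagon region $P$: by minimality of intersection each component of $\g$ meets $P$ in disjoint embedded arcs with endpoints on distinct sides and avoiding the vertices, and each such arc determines which of the five exchange-squares it crosses. The claim to establish is that for every configuration the identifications $\la_{i,\g}=\la_{i+1,\g}$ forced by Lemma~\ref{diagkilling} reduce the number of distinct triangulations among the $\la_{i,\g}$, and that the resulting cyclic word is always a relation of $\pt(S_\g)$: a pentagon relation when no square is crossed, and a degeneration of it --- a square relation, a bigon relation, or the trivial identity --- according to how many coincidences occur. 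Organizing this enumeration cleanly, and checking that the surviving factors always assemble into one of the admissible relations rather than an unrelated word, is the crux of the argument; once it is settled, (ii) holds for all three families and the homomorphism $\pi_\g$ is constructed.
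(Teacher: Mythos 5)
Your setup (presentation of $\pt(S)$ by generators and the three relation families, reduction to checking that each relation maps to a relation), your bigon case, and your square case all match the paper's proof. But the pentagon case --- which you yourself identify as the crux --- is left as an unproven claim: you describe a plan to ``enumerate the configurations of $\g$ inside the pentagon region'' and then assert that ``once it is settled'' the theorem follows. That enumeration is precisely the content of the theorem's hardest case, so as written the proposal has a genuine gap. The hedge is visible in your list of possible degenerations: you allow the pentagon to collapse to ``a square relation, a bigon relation, or the trivial identity,'' but a square relation can in fact never arise, which shows the case analysis was not actually carried out.

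The missing organizing idea, which is what the paper's proof supplies, is this: the pentagon region $\mathcal{P}$ is the union of exactly \emph{two} overlapping squares, and whether $\g$ crosses each of them is a property of $\g$ relative to $\mathcal{P}$ that is \emph{invariant under the diagonal exchanges performed inside} $\mathcal{P}$. This collapses your worry that ``the crossing conditions for the five exchanges interact'' into just three cases, indexed by the number ($0$, $1$, or $2$) of squares of $\mathcal{P}$ crossed by $\g$, with no need to enumerate arc configurations. If $\g$ crosses neither square, the five induced triangulations are distinct and one gets the pentagon relation in $\pt(S_\g)$. If $\g$ crosses exactly one square, one traces the five exchanges (as in Figure~\ref{pentagonpinching}): three of them occur in squares crossed by $\g$ and become identities by Lemma~\ref{diagkilling}, while the remaining two survive and are a diagonal exchange and its inverse, so the image is a bigon relation --- in particular the surviving factors automatically assemble into an admissible relation, which is exactly the point you flagged as unresolved. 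If $\g$ crosses both squares, all five induced triangulations coincide and the image is an identity. With this invariance observation and the one figure-check in the middle case, your outline becomes the paper's proof; without it, the pentagon case remains open.
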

\begin{proof}
It suffices to check that $\pi_\g$ sends relations in $\pt(S)$ to relations in $\pt(S_\g)$.

Notice first that if $\la_\g=\la'_\g$ then $\pi_\g([\la,\la'])=[\la_\g,\la_\g]=\id_{\la_\g}$. By Lemma~\ref{diagkilling} this happens if and only if $\g$ crosses the square in which the diagonal exchange is performed. Hence a bigon relation is either sent to a bigon relation or to an identity map.

For the square relation we note that, by Remark~\ref{triangles}, if two squares do not overlap then, after pinching, the triangles forming them are still distinct. Then, by Lemma~\ref{diagkilling}, distant diagonal exchanges are sent either to distant diagonal exchanges or to a combination of diagonal exchanges and identity maps. Namely, if $\g$ doesn't cross either square, then the square relation is sent to a square relation, if $\g$ crosses exactly one square, it is sent to a bigon relation and if it crosses both squares, it is sent to an identity map.

Finally, for the pentagon relation, we let $\mathcal{P}$ be the pentagon in which the triangulations $\la_1$ through $\la_5$ differ as in Figure~\ref{pentagonpinching}. Remember that $\mathcal{P}$ consists of two overlapping squares, hence there are three possibilities depending on the number of squares  of $\mathcal{P}$ that $\g$ crosses. Note that this number is independent of diagonal exchanges performed inside $\mathcal{P}$. If $\g$ doesn't cross any square, then the pentagon relation is sent to the corresponding pentagon relation in $\pt(S_\g)$. If $\g$ crosses exactly one of the squares in $\mathcal{P}$, we are in the situation of Figure~\ref{pentagonpinching}.
\begin{figure}[htb!]
\includegraphics{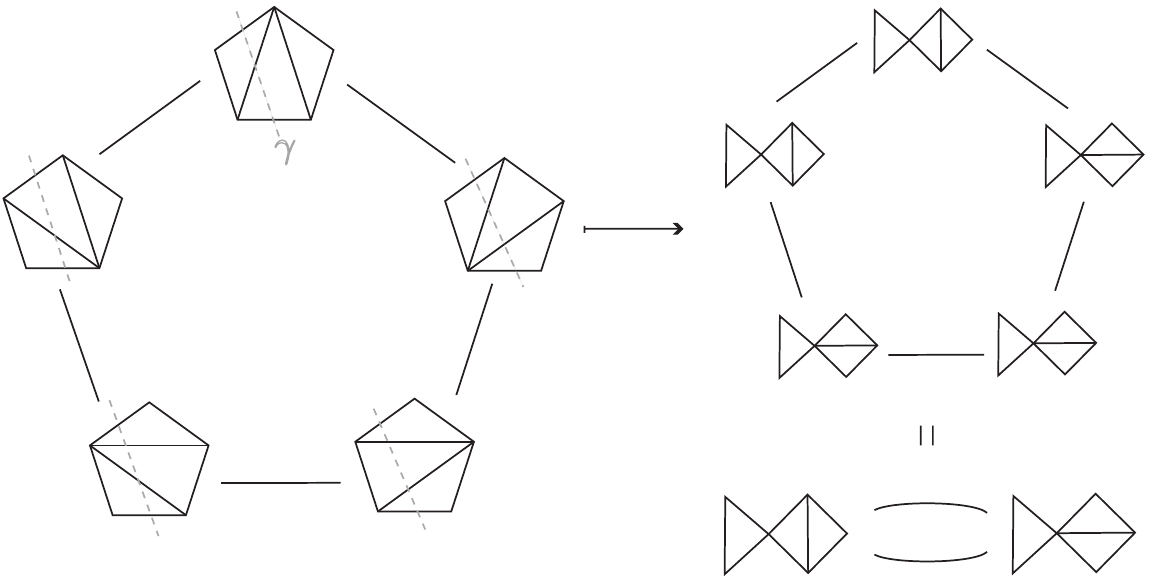}
\caption{\label{pentagonpinching} Pinching a pentagon}
\end{figure}
We see that three of the edges are sent to an identity, while the other two are sent to a diagonal exchange in a square and its inverse. Hence the pentagon relation is sent to a bigon relation in $\pt(S_\g)$. Finally, if $\g$ crosses both squares in $\mathcal{P}$, then the induced triangulations $\pi_\g(\la_i)$ are all identical, and, as such, the pentagon relation is sent to an identity in that case.
\end{proof}

Note that the image of $\pi_\g$ is connected, that is, given $\la_\g$ and $\la'_\g$, there is a path in the image connecting them, namely $[\la_\g,\la'_\g]=\pi_\g([\la,\la'])$.

The homomorphism $\pi_\g$ can be promoted to a cellular map $\Pi_\g\colon \PPt(S)\to \PPt(S_\g)$ as follows: $\Pi_\g$ sends the vertices $\la$ to the vertices $\la_\g$; it sends an edge $[\la,\la']$ either to the edge $[\la_\g,\la'_\g]$ if the triangulations are distinct or to the vertex $\la_\g=\la'_\g$; faces are sent either to the corresponding face, collapsed onto an edge or collapsed onto a point depending on the configurations described in the proof of Theorem~\ref{homom}. This can easily be done in such a way that $\Pi_\g$ is continuous. By construction, we have that $\pi_\g=(\Pi_\g)_*$, the induced map between fundamental groupoids.


\subsection{Mapping class group action}
\label{action}
The mapping class group acts naturally on the set of ideal triangulations via
\begin{align*}
\MCG(S)\times\Lambda(S)&\to\Lambda(S)\\
(f,\la)&\mapsto f(\la).
\end{align*}
Given any $\la\in\Lambda(S)$, this action induces a map
\begin{align*}
\Phi_\la\colon\MCG(S)&\to\pt(S)\\
f&\mapsto[f(\la),\la].
\end{align*}
This map is a homomorphism of groupoids in the sense that
\[\Phi_\la(g\circ f)=\Phi_{f(\la)}(g)\cdot\Phi_\la(f)\]
for any $f,g\in\MCG(S)$.

If we compose this map with the homomorphism described in Theorem~\ref{homom}, we obtain a homomorphism
\[\pi_\g\circ\Phi_\la\colon\MCG(S)\to\pt(S_\g)\]
which we want to investigate further. Note that this map is not obtained, at least in an obvious way, by an action on $\Lambda(S_\g)$.

We denote by $\MCG(S,\g)$ the stabilizer of the components of $\g$ in $\MCG(S)$. That is, given (a representative of) $f\in\MCG(S,\g)$, we ask that $f(\g_i)$ and $\g_i$ be homotopic for each (representatives of) components of $\g$. Note that $f$ and the $\g_i$'s can be chosen so that $f$ fixes $\g$ pointwise.

We then have an exact sequence
\begin{equation}
\label{exactseq}
1\to<T_{\g_1},\ldots,T_{\g_l}>\to\MCG(S,\g)\stackrel{\zeta}{\rightarrow}\MCG(S_\g)\to 1
\end{equation}
where $T_{\g_i}$ denote the Dehn twist along the components $\g_i$ of $\g$ and $\zeta$ is the natural \emph{cutting homomorphism} defined by choosing a representative fixing $\g$ and taking its restriction to $S_\g$ (see for example \cite{mcg} Proposition~3.20). One can also easily see that this map is surjective.

We recall that the \emph{kernel} of a groupoid homomorphism is the subgroupoid consisting of all the morphisms sent to identity elements. In the case at hand this means
\[\ker \pi_\g=\left\{[\lambda,\lambda']\in\pt(S)\,|\,\lambda_\g=\lambda'_\g\right\}.\]
In particular, it contains all the diagonal exchanges inside squares crossed by $\g$.

\begin{thm}
\label{tower}
The following diagram of exact sequences is commutative
\begin{equation}
\label{diagram}
\begin{gathered}
\xymatrix{1\ar[r]&{<T_{\g_1},\ldots,T_{\g_n}>}\ar[d]_{\Phi_{\la|}}\ar[r]&{\MCG(S,\g)}\ar[d]_{\Phi_\la}\ar[r]^{\zeta}&{\MCG(S_\g)}\ar[d]^{\Phi_{\la_\g}}\\
1\ar[r]&{\ker\pi_\g}\ar[r]&{\pt(S)}\ar[r]_{\pi_\g}&{\pt(S_\g)}
}
\end{gathered}
\end{equation}
\end{thm}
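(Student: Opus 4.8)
The plan is to verify commutativity of the two squares in the diagram, since the rows have already been established as exact sequences (the top by the stated exact sequence \eqref{exactseq}, the bottom by the definition of $\ker\pi_\g$ as the kernel of the groupoid homomorphism $\pi_\g$ from Theorem~\ref{homom}). I would begin with the right-hand square, which is the crux of the statement, and then deduce the left-hand square by restriction.

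For the right-hand square, I need to show that $\pi_\g\circ\Phi_\la=\Phi_{\la_\g}\circ\zeta$ as maps $\MCG(S,\g)\to\pt(S_\g)$. Unwinding the definitions, for $f\in\MCG(S,\g)$ the left side gives $\pi_\g([f(\la),\la])=[(f(\la))_\g,\la_\g]$ by Theorem~\ref{homom}, while the right side gives $\Phi_{\la_\g}(\zeta(f))=[\zeta(f)(\la_\g),\la_\g]$. So the entire content reduces to the identity
\[
(f(\la))_\g=\zeta(f)(\la_\g)
\]
as ideal triangulations of $S_\g$. The key geometric point here is that pinching and cutting commute: choosing a representative of $f$ that fixes $\g$ pointwise (as noted in the discussion of $\MCG(S,\g)$), the restriction of $f$ to $S_\g$ is exactly $\zeta(f)$, and since the induced triangulation $\la_\g$ is constructed from $\la\cap S_\g$ by grouping restricted edges into isotopy classes, applying $f$ first and then pinching yields the same family of arcs as pinching first and then applying the restriction $\zeta(f)$. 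I expect this compatibility to be the main obstacle: one must be careful that the representative minimizing intersections with $\la$ can be chosen compatibly after applying $f$, so that the isotopy-class groupings defining the induced triangulations on both sides genuinely agree rather than merely being isotopic through $S$; the fact that $f$ fixes $\g$ pointwise is what makes $\la_\g$ and $(f(\la))_\g$ live on the \emph{same} subsurface $S_\g$, which is essential since $S_\g$ and $f(S_\g)$ are not identified in general.

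For the left-hand square, I would restrict the identity above to the kernel of $\zeta$, namely the subgroup $\langle T_{\g_1},\ldots,T_{\g_l}\rangle$ generated by Dehn twists along the components of $\g$. Here $\Phi_{\la|}$ denotes the restriction of $\Phi_\la$ to this subgroup, so commutativity of the left square is automatic once the map is shown to land in $\ker\pi_\g$: for a Dehn twist $T_{\g_i}$, we have $\zeta(T_{\g_i})=\id$, so $(T_{\g_i}(\la))_\g=\zeta(T_{\g_i})(\la_\g)=\la_\g$, whence $\Phi_\la(T_{\g_i})=[T_{\g_i}(\la),\la]\in\ker\pi_\g$ by the description $\ker\pi_\g=\{[\la,\la']\mid\la_\g=\la'_\g\}$. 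The geometric reason is that a Dehn twist along $\g_i$ alters $\la$ only within an annular neighborhood of $\g_i$, which lies entirely in a square (or squares) crossed by $\g$, so its effect is killed under pinching. This confirms that $\Phi_{\la|}$ is well-defined with image in $\ker\pi_\g$ and makes the left square commute, completing the verification.
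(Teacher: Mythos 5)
Your proposal is correct and follows essentially the same route as the paper: you reduce commutativity of the right square to the identity $f(\la)_\g=\zeta(f)(\la_\g)$, proven by choosing a representative of $f$ fixing $\g$ so that restriction to $S_\g$ commutes with applying $f$ to the arcs $\la\cap S_\g$, exactly as the paper does. The only (minor) difference is in the left square: you deduce that Dehn twists land in $\ker\pi_\g$ formally from the right square together with $\zeta(T_{\g_i})=\id$, whereas the paper argues directly that $\la\cap S_\g$ and $T_{\g_i}(\la)\cap S_\g$ are isotopic in $S_\g$ --- the same underlying geometric fact either way.
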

\begin{proof}
We first show that $\pi_\g\circ\Phi_\la=\Phi_{\la_\g}\circ\zeta$.

Let $f\in\MCG(S,\g)$, which we identify with a representative fixing $\g$ so that $\zeta(f)=f_{|S_\g}$. We have
\[\pi_\g\circ\Phi_\la(f)=[f(\la)_\g,\la_\g]\]
and
\[\Phi_{\la_\g}\circ\zeta(f)=[\zeta(f)(\la_\g),\la_\g],\]
hence it suffices to show that $f(\la)_\g$ and $\zeta(f)(\la_\g)$ are isotopic.

By definition, $\zeta(f)(\la\cap S_\g)=f_{|S_\g}(\la\cap S_\g)=f(\la\cap S_\g)=f(\la)\cap S_\g$. Recalling that $\la_\g$ is obtained by taking isotopy classes of segments in $\la\cap S_\g$ and similarly for $f(\la)_\g$, we obtain the result.

It remains to show that Dehn twists along components of $\g$ are in the kernel of $\pi_\g$. This follows from the fact that $\la\cap S_\g$ and $T_{\g_i}(\la)\cap S_\g$ can easily be seen to be isotopic in $S_\g$.
\end{proof}

\begin{rem}
The map $\pi_\g$ is not in general surjective. However, using Theorem~\ref{diagram} and the surjectivity of the map $\zeta$, we see that the action of the mapping class group of the subsurface $S_\g$ on $\pt(S_\g)$ is induced by the action of the mapping class group of $S$ on $\pt(S)$. Explicitely, given $g\in\MCG(S_\g)$, pick $f\in\MCG(S,\g)$ such that $\zeta(f)=g$, then
\[\Phi_{\la_\g}(g)=\pi_\g\circ\Phi_\la(f).\]
\end{rem}

\section{Shear coordinates and the augmented Teichm\"uller space}
\label{section 2}

\subsection{Teichm\"uller space and shear coordinates}

An $S$--\emph{marked hyperbolic surface} is an isotopy class of homeomorphisms $[\phi\colon S\to\Sigma]$ where $\Sigma$ is a complete hyperbolic surface with finite area (and hence cusp ends). Two $S$--marked hyperbolic surfaces $[\phi\colon S\to\Sigma]$ and $[\phi'\colon S\to\Sigma']$ are called \emph{equivalent} if there exists an isometry $h\colon\Sigma\to\Sigma'$ such that $h\circ\phi$ is isotopic to $\phi'$. The \emph{Teichm\"uller space} $\T(S)$ is the set of equivalence classes of $S$--marked hyperbolic surfaces. We will often denote an element of $\T(S)$ by the hyperbolic metric $m$ on $S$, the pullback of the one on $\Sigma$ by $\phi$.

The (anti-) action of the mapping class group on the Teichm\"uller space is defined in terms of change of marking: for $f\in\MCG(S)$ and $[\phi\colon S\to\Sigma]\in\T(S)$, the action is given by
\[f^*[\phi\colon S\to\Sigma]=[\phi\circ f\colon S\to\Sigma].\]
In terms of metrics, this action can be understood as the pullback of $m$ by $f$ which we will denote $f^*m$. The quotient of the Teichm\"uller space by this action is the moduli space $\mathcal{M}(S)$.

We will use the notion of shear parameters associated to the edges of an ideal triangulation (or more generally a lamination) introduced by Thurston in \cite{Thurston} and studied further by Bonahon \cite{Bo1}. We will in fact work with the exponentials of these parameters and a good reference for our purpose is \cite{Liu1}. They are obtained as follows: given a hyperbolic metric $m$ on $S$, lift $\la$ to the universal cover $\HH^2$ and consider an edge $\widetilde{e}$ of the lift which projects onto $e\in\la$. The \emph{logarithmic shear parameter}  along $e$ for the metric $m$ is given by the signed distance between the orthogonal projections of the vertices facing $\widetilde{e}$. The distance is measured positively ``to the left'' when facing the edge (see Figure~\ref{shearparam}). 
\begin{figure}[htb!]
\includegraphics{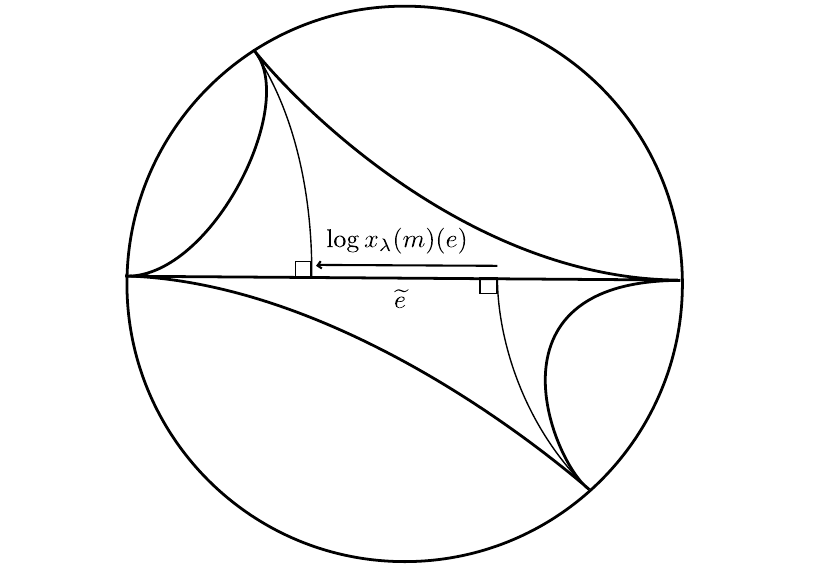}
\caption{\label{shearparam} Geometric interpretation of shear parameters}
\end{figure}
The collection of (exponential) shear parameters  associated to an ideal triangulation $\la$ gives an embedding $x_\la\colon\T(S)\to\R^\la$ which we will call a \emph{chart} by abuse of language. Since we do not assume that triangulations carry labelings, we want to think of $x_\la(m):\la\to\R$ as a function on the edges of $\la$, associating to $e\in\la$ the shear parameter $x_\la(m)(e)$ along $e$ for the metric $m$. These parameters are in fact bona fide coordinates on a larger space (called the enhanced Teichm\"uller space) in which $\T(S)$ embeds.

Given another ideal triangulation $\la'$, we will denote the change of coordinates map by $\Phi_{\la\la'}=x_\la\circ x^{-1}_{\la'}\colon\R^{\la'}\to\R^{\la}$. Here, again by abuse of notation, it should be understood that the map is only defined on some subset of $\R^{\la'}$ (which is in fact open and dense). These maps turn out to be rational, explicit formulae can be found for example in \cite{Liu1} (see also Appendix~\ref{example})

Finally, we consider the \emph{shear coordinates groupoid} $\sh{S}$ with set of objects $\Lambda(S)$ and morphisms given by the changes of coordinates $\Phi_{\la\la'}$ subject to the natural composition relation $\Phi_{\la\la'}\circ\Phi_{\la'\la''}=\Phi_{\la\la''}$. By construction, there is a natural isomorphism of groupoids $\Phi\colon\pt(S)\to\sh{S}$ sending a path $[\la,\la']$ to the associated change of coordinates $\Phi_{\la\la'}$. We can push the map $\pi_\g$ to a map between shear coordinates groupoids so that the following diagram commutes:
\begin{align}
\label{sheardiagram}
\xymatrix{{\pt(S)}\ar[d]^{\pi_\g} \ar[r]^{\Phi} & {\sh{S}}\ar[d]_{\pi_\g}\\
					{\pt(S_\g)} \ar[r]^{\Phi_{\g}} & {\sh{S_\g}}}
\end{align}
where $\Phi_\g$ denotes the corresponding map between groupoids for the pinched surface.
At the level of shear coordinates, the map $\pi_\g$  sends the coordinate change $\Phi_{\la\la'}$ on $\T(S)$ to $\Phi_{\la_\g\la'_\g}$ on $\T(S_\g)$. It is a well-defined homomorphism by Theorem~\ref{homom}.

\subsection{The augmented Teichm\"uller space}

We denote by $\mathcal{C}(S)$ the set of multicurves on $S$ together with the empty set representing the ``empty'' multicurve. As a set, the \emph{augmented Teichm\"uller space} \cite{Ber1,Abi1,Abi2} is defined as
\[\overline{\T(S)}=\bigsqcup_{\g\in\mathcal{C}(S)}\T(S_\g)\]
where, if $S_\g$ is disconnected, $\T(S_\g)$ denotes the product of Teichm\"uller spaces of its components. By definition, it contains the Teichm\"uller space $\T(S)=\T(S_{\emptyset})$. The spaces $\T(S_\g)$ for non-empty multicurves $\g$ are called the (lower dimensional) \emph{strata} of $\overline{\T(S)}$. One can also think of elements of $\overline{\T(S)}$ as equivalence classes of $S_\g$--marked hyperbolic surfaces $[\phi_\g\colon S_\g\to\Sigma_\g]$. Once again, we will identify such an element with a hyperbolic metric $m_\g$ on $S_\g$.

This space can be topologized in many equivalent ways, in the realm of complex analysis, group representations or hyperbolic geometry \cite{Abi2}. We will use the following description near a lower dimensional stratum given in terms of lengths of geodesics: first recall that, if $\alpha$ is a simple closed curve on a hyperbolic surface $\Sigma$ with hyperbolic metric $m$ then $\alpha$ is homotopic either to a point, to a puncture or to a unique geodesic $\alpha_m$ for $m$. We let $\ell_m(\alpha)$ be 0 in the first two cases and equal to the length of $\alpha_m$ in the third. Then, given a multicurve $\g$, a sequence $m_n$ in $\T(S)$ converges to $m_\g\in\T(S_\g)$ if, for any simple closed curve $\alpha$ not intersecting $\g$, we have $\ell_{m_n}(\alpha)\to \ell_{m_\g}(\alpha)$. This implies in particular that the lengths of the components of $\g$ tend to 0. Note that in fact one would only need to check the convergence for a sufficiently large fixed collection of curves determining the metric on $S_\g$ (an \
\emph{ample family} in the terminology of \cite{Abi2}).

The action of the mapping class group on the augmented Teichm\"uller space is once again given in terms of change of marking as follows: for $f\in\MCG(S)$ and $[\phi_\g\colon S_\g\to\Sigma_\g]\in\T(S_\g)$, we let
\[f^*[\phi_\g\colon S_\g\to\Sigma_\g]=[\phi_\g\circ f\colon S_{f^{-1}(\g)}\to\Sigma_\g].\]
It is shown in \cite{Abi2} that this action is continuous. It reduces to the standard action on $\T(S)$ if $\g=\emptyset$ and, more generally, for $f\in\MCG(S)$, it induces a map $f^*\colon\T(S_\g)\to\T(S_{f^{-1}(\g)})$ between lower dimensional strata. One of the goals of this section is to understand this map $f^*$ explicitely in terms of shear coordinates.

We can understand this action on the lower dimensional strata further using the exact sequence (\ref{exactseq}) described in Section~\ref{action}. Namely, if $f\in\MCG(S,\g)$, then $f^*m_\g=\zeta(f)^* m_\g$ where the action on the right hand side is that of $\MCG(S_\g)$ on $\T(S_\g)$. In addition, if $T_{\g_i}$ is a Dehn twist along one of the components of $\g$ then $T^*_{\g_i} m_\g=m_\g$. This implies in particular that there are infinite order elements acting non-freely on the augmented Teichm\"uller space. Nonetheless, this action is still relatively well-behaved near the lower dimensional strata and the quotient is topologically the Deligne-Mumford compactification $\overline{\mathcal{M}(S)}$ of the moduli space \cite{Abi2}.


\subsection{Extension of shear coordinates}
\label{extension}

The choice of an ideal triangulation $\la\in\Lambda(S)$ provides a chart $x_\la$ on $\T(S)$, as well as charts on each of the lower-dimensional strata $\T(S_\g)$ associated to the pinched triangulations $\la_\g$. We will denote them by $y_{\la_\g}\colon\T(S_\g)\to\R^{\la_\g}$. In \cite{moi}, we showed that convergence to a lower dimensional stratum in the augmented Teichm\"uller space leads to a certain convergence of the coordinates $x_\la$ to the the coordinates $y_{\la_\g}$. The key ingredient is the following map: given and edge $f\in\la_\g$, recall that $f$ is obtained by taking isotopy classes of  segments of edges in $\la\cap S_\g$. Then, for each edge $e\in\la$, denote by $k(e,f)$ the number of segments of $e$ forming the homotopy class of $f$. We then consider the map
\[\Theta_{\g,\la}\colon\R^\la\to\R^{\la_\g}\]
defined for any $x\colon\la\to\R\in\R^{\la}$ and $f\in\la_\g$ by
\[\Theta_{\g,\la}(x)(f)=\prod_{e\in\la}x(e)^{k(e,f)}\]
In other words, $\Theta_{\g,\la}$ associates to an edge of $\la_\g$ the product of shear parameters along edges of $\la$ which collapse onto $f$.

For a multicurve $\g=\cup_i\g_i$, we let $\ell_{m_t}(\g)=\sum_i\ell_{m_t}(\g_i)$. We then have the following theorem.
\begin{thm}
\label{shear}
Let $m_t\in\T(S)$, $t>0$, be a continuous family of hyperbolic metrics on $S$ and let $m_\g\in\T(S_\g)$ for some multicurve $\g$. If $\lim_{t\to 0}\ell_{m_t}(\g)=0$ then
\[m_t\xrightarrow[t\rightarrow 0]{} m_\g\text{ in }\overline{\T(S)}\Longleftrightarrow \lim_{t\to 0}\Theta_{\g,\la}(x_\la(m_t))=y_{\la_\g}(m_\g)
\]
\end{thm}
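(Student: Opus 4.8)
\subsection*{Proof proposal}

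The plan is to verify the converse against the length-function description of the topology on $\overline{\T(S)}$ recalled above. Since it suffices to test an ample family, I reduce the problem to showing that $\ell_{m_t}(\alpha)\to\ell_{m_\g}(\alpha)$ for every simple closed curve $\alpha$ disjoint from $\g$. Such an $\alpha$ lies in $S_\g$ and there are two cases. If $\alpha$ is peripheral in $S_\g$, it is either homotopic to an original puncture---so that $\ell_{m_t}(\alpha)=\ell_{m_\g}(\alpha)=0$---or homotopic to one of the new cusps, in which case it is isotopic in $S$ to a component $\g_i$ and $\ell_{m_t}(\alpha)=\ell_{m_t}(\g_i)\to 0=\ell_{m_\g}(\alpha)$ directly by the hypothesis $\ell_{m_t}(\g)\to 0$. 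The substance of the theorem is therefore the case of a curve $\alpha$ that is essential in $S_\g$.

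For such an $\alpha$ I would compute lengths through the holonomy representation, using that in shear coordinates $2\cosh(\ell_m(\alpha)/2)=|\mathrm{tr}\,\rho_m(\alpha)|$, where $\rho_m(\alpha)$ is the ordered product of edge matrices $E(x)$ and left/right turn matrices prescribed by the topological crossing sequence of $\alpha$ with the triangulation. Writing $L^S_\alpha\colon\R^\la\to\R$ and $L^{S_\g}_\alpha\colon\R^{\la_\g}\to\R$ for the resulting length functions on the two surfaces, the goal becomes the comparison
\[
L^S_\alpha(x_\la(m_t))\longrightarrow L^{S_\g}_\alpha\big(y_{\la_\g}(m_\g)\big)=\ell_{m_\g}(\alpha),
\]
the equality on the right being the definition of $L^{S_\g}_\alpha$, while the hypothesis $\Theta_{\g,\la}(x_\la(m_t))\to y_{\la_\g}(m_\g)$ gives, by continuity of $L^{S_\g}_\alpha$, that $L^{S_\g}_\alpha(\Theta_{\g,\la}(x_\la(m_t)))\to\ell_{m_\g}(\alpha)$. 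Thus it is enough to compare $L^S_\alpha(x)$ with $L^{S_\g}_\alpha(\Theta_{\g,\la}(x))$.

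The comparison rests on the way $\alpha$ meets the two triangulations. Being disjoint from $\g$, the curve $\alpha$ crosses $\la$ inside $S_\g$, and its intersection pattern with $\la$ refines the one with $\la_\g$: each crossing of an edge $f\in\la_\g$ corresponds, in $S$, to $\alpha$ running once across the band of parallel segments of $\la\cap S_\g$ whose common isotopy class is $f$, meeting segments of edges $e_1,\dots,e_k\in\la$ with $\prod_j x(e_j)=\Theta_{\g,\la}(x)(f)$ by the very definition of $\Theta_{\g,\la}$. Using Remark~\ref{triangles} to identify the triangles of $\la_\g$ with those of $\la$, the turns of $\alpha$ outside the bands coincide on the two surfaces, so the $\la$--holonomy word at parameters $x$ is obtained from the $\la_\g$--holonomy word at parameters $\Theta_{\g,\la}(x)$ by expanding each edge matrix $E(\Theta_{\g,\la}(x)(f))$ into the word recording the passage through the corresponding band. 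The claim I would establish is that this band word reduces to the single matrix $E(\Theta_{\g,\la}(x)(f))$: for the simplest band this is the local matrix relation $E(x_1)\,T\,E(x_2)=E(x_1x_2)$ for the appropriate interior turn $T$, and the general case follows by iterating it. Edges of $\la$ disjoint from $\g$ require nothing, since there $\Theta_{\g,\la}$ is the identity and $x_\la(m_t)(e)\to y_{\la_\g}(m_\g)(e)$ already.

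The main obstacle is exactly this band reduction. The delicate point is that the parameters $x_\la(m_t)(e_j)$ of the edges meeting $\g$ may individually degenerate to $0$ or $\infty$ as the collars pinch, so the reduction cannot be carried out parameter by parameter; one must show that only their product along the band governs the limiting holonomy, while the interior turn and edge contributions collected inside the collar collapse to the single turn $T$. This is the quantitative counterpart of the forward computation of \cite{moi}, and I would carry it out with the collar lemma---which confines the geodesic representative of $\alpha$ to the thick part and bounds the combinatorics of each band passage---together with an asymptotic expansion of the holonomy across the collar in terms of $\ell_{m_t}(\g_i)\to 0$. Once the band reduction is in hand, passing to traces yields $\ell_{m_t}(\alpha)\to\ell_{m_\g}(\alpha)$ for all essential $\alpha$, and together with the peripheral case above this gives $m_t\to m_\g$ in $\overline{\T(S)}$.
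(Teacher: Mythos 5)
Your overall structure matches the paper's proof in Appendix~\ref{shearext} (Proposition~\ref{topology}): reduce to showing $\ell_{m_t}(\alpha)\to\ell_{m_\g}(\alpha)$ for simple closed curves $\alpha$ disjoint from $\g$, handle the peripheral cases directly, and compare traces computed in shear coordinates from the crossing sequence of $\alpha$ with $\la$ versus $\la_\g$. But the step you yourself single out as the core --- the ``band reduction'' --- contains a genuine gap, indeed two. First, the exact local identity you propose, $E(x_1)\,T\,E(x_2)=E(x_1x_2)$, is false in every standard convention: for instance, reading the paper's dominoes as transfer matrices, a two-edge band product such as $L_{x_1}R_{x_2}$ has an entry $x_1^{1/2}x_2^{1/2}+x_1^{1/2}x_2^{-1/2}$, and the extra summand $x_1^{1/2}x_2^{-1/2}$ does not occur in any single domino with parameter $x_1x_2$. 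No algebraic cancellation collapses the band word; the $\la$--holonomy word is never equal to the $\la_\g$--holonomy word evaluated at $\Theta_{\g,\la}(x)$, it only becomes so in the limit. Second, the fallback you offer --- collar lemma plus ``an asymptotic expansion of the holonomy across the collar'' --- is a statement of intent rather than an argument, and it is exactly where the theorem lives: one must show that every extra term tends to $0$ using only the hypothesis $\ell_{m_t}(\g)\to 0$ (you cannot invoke convergence in $\overline{\T(S)}$, which is what is being proved). As written, your proposal simultaneously asserts the exact identity and concedes that it cannot hold ``parameter by parameter,'' and supplies neither alternative.

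The missing mechanism is what the paper isolates as Lemma~\ref{pattern}: if a segment $\kappa_\g$ of $\g$ separates two triangles $P$ and $Q$, then the shearing cocycle $\s_{m_t}(P,Q;\kappa_\g)$ diverges to $+\infty$ or $-\infty$ according to whether $\kappa_\g$ crosses them in a left-right or a right-left pattern, as soon as $\ell_{m_t}(\g)\to 0$. In the domino diagram of $\alpha$, an admissible path through a band that dips from the top row into the bottom row acquires a factor $e^{-\s_{m_t}(P_{j_k},P_{j_l};\kappa_\g)}$ for an arc $\kappa_\g$ crossing in a left-right pattern (symmetrically for paths rising from the bottom row), hence tends to $0$; only the all-top and all-bottom paths survive, contributing $[\Theta_{\g,\la}(x_\la(m_t))(\mu_i)]^{\pm1/2}\to y_{\la_\g}(m_\g)(\mu_i)^{\pm1/2}$, which is where the hypothesis on $\Theta_{\g,\la}$ enters. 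Matching the dips of a path with crossing patterns of the correct sign --- including the boundary case where a dip ends at the last column, which the paper handles by enlarging $\g$ with a curve around each puncture --- is the combinatorial and hyperbolic-geometric heart of the proof, and it is precisely what your proposal leaves unproved.
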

\begin{proof}
The forward implication was proven in Proposition~6 of \cite{moi}. We give a proof of the other implication in Appendix~\ref{shearext}.
\end{proof}

This theorem implies that the topology of the augmented Teichm\"uller space can be described explicitely and globally in terms of the shear coordinates associated to a given triangulation $\la$ on $S$. This is in contrast to similar descriptions in terms of Fenchel-Nielsen coordinates. There, given a pants decomposition, that is, a multicurve $\mathcal{P}$ with $3g-3+n$ components, one can easily describe the topology when approaching lower dimensional strata corresponding to pinching some or all of the components of $\mathcal{P}$, by allowing the corresponding lengths coordinates to be 0 and essentially forgetting the twisting coordinates. However, the strata corresponding to pinching curves wich are transverse to $\mathcal{P}$ cannot be easily described in the chart associated to $\mathcal{P}$ and are not undowed with natural induced Fenchel-Nielsen coordinates.

\begin{rem}
The enhanced Teichm\"uller space in which $\T(S)$ embeds can be endowed with a version of the well-known Weil-Petersson Poisson structure (see \cite{fo1}). Its push-forward in the coordinates $x_\la$ has a simple expression in terms of the combinatorics of the triangulation $\la$ which is related to Thurston's intersection form. We showed in \cite{moi} Proposition~9 that this expression is consistant under pinching, so that the logarithm of the map $\Theta_{\g,\la}$ is in fact a Poisson morphism in the sense that, for any $f,f'\in\la_\g$,
\[\left\{\log y_{\la_\g}(f),\log y_{\la_\g}(f')\right\}_{\T(S_\g)}=\left\{\log\Theta_{\g,\la}(x_\la)(f),\log\Theta_{\g,\la}(x_\la)(f')\right\}_{\T(S)}\]
as (constant) functions over $\T(S_\g)$ and $\T(S)$ respectively.
\end{rem}


\subsection{Extension of the changes of coordinates}

Via the map $\pi_\g$, a change of coordinates $\Phi_{\la\la'}$ on $\T(S)$ provides changes of coordinates $\Phi_{\la_\g\la'_\g}$ on each stratum $\T(S_\g)$. We want to show that these changes of coordinates can be understood as ``limits'' of the ones on $\T(S)$.

As such we consider the following diagram:
\begin{equation}
\label{noncom}
\begin{gathered}
\xymatrix{{\R^{\la'}}\ar@<-1ex>[d]_{\Theta_{\g,\la'}} \ar[r]^{\Phi_{\la\la'}} & {\R^\la}\ar@<-1ex>[d]^{\Theta_{\g,\la}}\\
					{\R^{\la'_\g}} \ar[r]_{\Phi_{\la_\g\la'_\g}} & {\R^{\la_\g}}
				 }
\end{gathered}
\end{equation}
It is not in general commutative, however, it becomes so ``at infinity''.
\begin{prop}
\label{behavior}
Suppose $m_t\in\T(S)$, $t>0$, and $m_\g\in\T(S_\g)$ are such that $m_t\to m_\g$ in $\overline{\T(S)}$. Then
\[\lim_{t\to0}\Theta_{\g,\la}\circ\Phi_{\la\la'}(x_{\la'}(m_t))=\lim_{t\to0}\Phi_{\la_\g\la'_\g}\circ\Theta_{\g,\la'}(x_{\la'}(m_t))=y_{\la_\g}(m_\g)
\]
\end{prop}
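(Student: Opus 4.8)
The plan is to reduce both limits directly to Theorem~\ref{shear}, so that the proposition becomes a short bookkeeping argument rather than a new estimate. The first thing I would observe is that the hypothesis $m_t\to m_\g$ in $\overline{\T(S)}$ already forces $\ell_{m_t}(\g_i)\to 0$ for every component of $\g$, by the length description of convergence recalled above. Consequently Theorem~\ref{shear} applies to the family $m_t$ for \emph{any} ideal triangulation of $S$, and I am free to use it for both $\la$ and $\la'$.

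For the left-hand equality, the key simplification is purely formal: since $\Phi_{\la\la'}=x_\la\circ x_{\la'}^{-1}$ by definition of the change of coordinates, we have $\Phi_{\la\la'}(x_{\la'}(m_t))=x_\la(m_t)$, and therefore $\Theta_{\g,\la}\circ\Phi_{\la\la'}(x_{\la'}(m_t))=\Theta_{\g,\la}(x_\la(m_t))$. The forward implication of Theorem~\ref{shear}, applied to the triangulation $\la$, then yields at once that this quantity converges to $y_{\la_\g}(m_\g)$.

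For the right-hand equality I would first apply Theorem~\ref{shear} with $\la'$ in place of $\la$, obtaining $\lim_{t\to 0}\Theta_{\g,\la'}(x_{\la'}(m_t))=y_{\la'_\g}(m_\g)$. It then remains to carry this limit through the map $\Phi_{\la_\g\la'_\g}$. Since $\Phi_{\la_\g\la'_\g}$ is, by construction, the change of coordinates $y_{\la_\g}\circ y_{\la'_\g}^{-1}$ on $\T(S_\g)$, evaluating it at $y_{\la'_\g}(m_\g)$ returns precisely $y_{\la_\g}(m_\g)$, which matches the value produced by the left-hand side and closes the chain of equalities.

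The one point demanding care --- and the only real (if modest) obstacle --- is the interchange of the limit with the rational map $\Phi_{\la_\g\la'_\g}$, which is defined and continuous only on an open dense subset of $\R^{\la'_\g}$. Here I would invoke the fact, noted in the discussion preceding the proposition, that this subset contains the image of $\T(S_\g)$; since $m_\g\in\T(S_\g)$, the limit point $y_{\la'_\g}(m_\g)$ lies inside it. Continuity of $\Phi_{\la_\g\la'_\g}$ at that point legitimizes the identity $\lim_{t\to0}\Phi_{\la_\g\la'_\g}\bigl(\Theta_{\g,\la'}(x_{\la'}(m_t))\bigr)=\Phi_{\la_\g\la'_\g}\bigl(\lim_{t\to0}\Theta_{\g,\la'}(x_{\la'}(m_t))\bigr)$, and with it the existence of the limit asserted in the statement, completing the proof.
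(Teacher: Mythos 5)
Your proposal is correct and follows essentially the same route as the paper: the left-hand limit collapses via $\Phi_{\la\la'}(x_{\la'}(m_t))=x_\la(m_t)$ and Theorem~\ref{shear} applied to $\la$, while the right-hand limit uses Theorem~\ref{shear} applied to $\la'$ followed by $\Phi_{\la_\g\la'_\g}$. Your two added remarks --- that $m_t\to m_\g$ in $\overline{\T(S)}$ supplies the length hypothesis of Theorem~\ref{shear}, and that continuity of the rational map $\Phi_{\la_\g\la'_\g}$ at $y_{\la'_\g}(m_\g)$ (which lies in its domain) justifies passing the limit through --- are exactly the points the paper leaves implicit, so they strengthen rather than alter the argument.
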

\begin{proof}
This follows directly from the definitions and Theorem~\ref{shear}. That is, on the one end,
\[\Theta_{\g,\la}\circ\Phi_{\la\la'}(x_{\la'}(m_t))=\Theta_{\g,\la}(x_\la(m_t))\xrightarrow[t\rightarrow 0]{} y_{\la_\g}(m_\g)\]
while, on the other hand,
\[\Phi_{\la_\g\la'_\g}\circ\Theta_{\g,\la'}(x_{\la'}(m_t))\xrightarrow[t\rightarrow 0]{}\Phi_{\la_\g\la'_\g}(y_{\la'_\g}(m_\g))=y_{\la_\g}(m_\g)
\]
\end{proof}


\subsection{Action of the mapping class group on the augmented Teichm\"uller space}

Given any ideal triangulation $\la\in\Lambda(S)$, the homomorphism from the mapping class group to the Ptolemy groupoid described in Section~\ref{action} translates into a homomorphism
\begin{align*}
\Phi_\la\colon\MCG(S)&\to\sh{S}\\
 f&\mapsto\Phi_{f(\la)\la}\,.
\end{align*}
 
Using this homomorphism and the natural map $\I_{f,\la}\colon\R^{f(\la)}\to\R^\la$ induced by the identification of the edges of $\la$ and $f(\la)$ by $f$, we obtain an (anti-)representation
\begin{align*}
\MCG(S)&\to \mathrm{Rat}(\R^\la)\\
f&\mapsto\Phi_{f,\la}=\I_{f,\la}\circ\Phi_{f(\la)\la}
\end{align*}
in the space of rational functions on $\mathbb{\R}^\la$.
This representation coincides with the action of the mapping class group on $\T(S)$ in the sense that
\[\Phi_{f,\la}(x_\la(m))=x_\la(f^*m)\]
since $\I_{f,\la}(x_{f(\la)}(m))=x_\la(f^*m)$. This construction was described first by Penner in \cite{Penner1} using the closely related $\la$--lengths coordinates on the decorated Teichm\"uller space.

We now want to express the action of the mapping class group on the augmented Teichm\"uller space in terms of shear coordinates. Recall that, by definition, if $m_\g\in\T(S_\g)$, then $f^* m_\g\in\T(S_{f^{-1}(\g)})$. As such, we want to fix a triangulation $\la\in\Lambda(S)$ and describe this action as a map between induced charts $\Phi_{f,\la,\g}\colon\R^{\la_\g}\to\R^{\la_{f^{-1}(\g)}}$.

We claim that this map is given by the lower line in Diagram~(\ref{mcgaction}):
\begin{equation}
\label{mcgaction}
\begin{gathered}
\xymatrix{\R^{\la}\ar@<-1ex>[d]_{\Theta_{\g,\la}} \ar[r]^{\Phi_{f(\la)\la}} &{\R^{f(\la)}}\ar@<-1.5ex>[d]_{\Theta_{\g,f(\la)}} \ar[r]^{\I_{f,\la}} & {\R^\la\ \ \ \ \ \ }\ar@<-3ex>[d]^{\Theta_{f^{-1}(\g),\la}}\\
					{\ \R^{\la_\g}} \ar[r]_{\Phi_{f(\la)_\g\la_\g}} &{\ \ \R^{f(\la)_\g}}\ar[r]_{\I_{f,\la,\g}} & {\R^{\la_{f^{-1}(\g)}}}
				 }
\end{gathered}
\end{equation}

In this diagram, the map $\I_{f,\la,\g}$ is defined as follows: $f$ identifies the family of segments $\la\cap S_{f^{-1}(\g)}$ with the family $f(\la)\cap S_\g$. Taking isotopy classes of arcs, we obtain an identification of the edges of $\la_{f^{-1}(\g)}$ with those of $f(\la)_\g$ which determines the map $\I_{f,\la,\g}$. By construction, the second square in Diagram~(\ref{mcgaction}) is commutative.

\begin{thm}
The action of the mapping class group on $\overline{\T(S)}$ is given in shear coordinates by the collection of maps
\[\Phi_{f,\la,\g}=\I_{f,\la,\g}\circ\Phi_{f(\la)_\g\la_\g}\colon\R^{\la_\g}\longrightarrow\R^{\la_{f^{-1}(\g)}}\]
for all $\g\in\mathcal{C}$, in the sense that
\[\Phi_{f,\la,\g}(y_{\la_\g}(m_\g))=y_{\la_{f^{-1}(\g)}}(f^*m_\g)\]

In addition, if $f\in\MCG(S,\g)$ then
\[\Phi_{f,\la,\g}=\Phi_{\zeta(f),\la_\g}\]
\end{thm}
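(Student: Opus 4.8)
The plan is to deduce the strata identity from the already-established action formula on $\T(S)$ by a limiting argument, using the convergence Theorem~\ref{shear} together with the continuity of the mapping class group action on $\overline{\T(S)}$ (from \cite{Abi2}). First I would fix $m_\g\in\T(S_\g)$ and choose a continuous family $m_t\in\T(S)$ with $m_t\to m_\g$ and $\ell_{m_t}(\g)\to 0$. By continuity of the action we have $f^*m_t\to f^*m_\g$ in $\overline{\T(S)}$, and since $\ell_{f^*m_t}(f^{-1}(\g_i))=\ell_{m_t}(\g_i)\to 0$, the hypotheses of Theorem~\ref{shear} are met both for the family $m_t\to m_\g$ (with triangulation $f(\la)$ and multicurve $\g$) and for the family $f^*m_t\to f^*m_\g$ (with triangulation $\la$ and multicurve $f^{-1}(\g)$).

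The heart of the argument is the computation of $\Theta_{f^{-1}(\g),\la}(x_\la(f^*m_t))$. Using the relation $x_\la(f^*m)=\I_{f,\la}(x_{f(\la)}(m))$ together with the commutativity of the right-hand square of Diagram~(\ref{mcgaction}) (which holds by the very construction of $\I_{f,\la,\g}$), I would rewrite this quantity as $\I_{f,\la,\g}\bigl(\Theta_{\g,f(\la)}(x_{f(\la)}(m_t))\bigr)$. Now the forward implication of Theorem~\ref{shear}, applied with the triangulation $f(\la)$, gives $\Theta_{\g,f(\la)}(x_{f(\la)}(m_t))\to y_{f(\la)_\g}(m_\g)$, while the same theorem applied to $f^*m_t\to f^*m_\g$ gives $\Theta_{f^{-1}(\g),\la}(x_\la(f^*m_t))\to y_{\la_{f^{-1}(\g)}}(f^*m_\g)$. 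Since $\I_{f,\la,\g}$ is a continuous (coordinate-relabeling) map, comparing the two limits yields the key identity
\[\I_{f,\la,\g}\bigl(y_{f(\la)_\g}(m_\g)\bigr)=y_{\la_{f^{-1}(\g)}}(f^*m_\g).\]
Because $\Phi_{f(\la)_\g\la_\g}$ is by definition the change of coordinates $y_{f(\la)_\g}\circ y_{\la_\g}^{-1}$ on $\T(S_\g)$ (well-defined by Theorem~\ref{homom}), one has $\Phi_{f(\la)_\g\la_\g}(y_{\la_\g}(m_\g))=y_{f(\la)_\g}(m_\g)$, and substituting into the identity above gives $\Phi_{f,\la,\g}(y_{\la_\g}(m_\g))=y_{\la_{f^{-1}(\g)}}(f^*m_\g)$, which is the first assertion.

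For the second assertion I would specialize to $f\in\MCG(S,\g)$, where $f^{-1}(\g)=\g$, so that both $\Phi_{f,\la,\g}$ and $\Phi_{\zeta(f),\la_\g}$ are self-maps of $\R^{\la_\g}$, and I would match their two factors. The change-of-coordinates factors agree because the proof of Theorem~\ref{tower} shows that $f(\la)_\g$ and $\zeta(f)(\la_\g)$ are isotopic, hence equal as objects of $\Lambda(S_\g)$, so that $\Phi_{f(\la)_\g\la_\g}=\Phi_{\zeta(f)(\la_\g)\la_\g}$. The identification factors agree because, when $f^{-1}(\g)=\g$, the restriction of $f$ to $S_\g$ is exactly $\zeta(f)$, so the combinatorial edge identification defining $\I_{f,\la,\g}$ coincides with the one induced by $\zeta(f)$ that defines $\I_{\zeta(f),\la_\g}$. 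Composing the matching factors gives $\Phi_{f,\la,\g}=\Phi_{\zeta(f),\la_\g}$.

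The main obstacle is the key identity $\I_{f,\la,\g}(y_{f(\la)_\g}(m_\g))=y_{\la_{f^{-1}(\g)}}(f^*m_\g)$: morally it just expresses that shear coordinates are natural under the restriction $f\colon S_{f^{-1}(\g)}\to S_\g$, but since this restriction is a homeomorphism between two \emph{genuinely different} subsurfaces rather than a self-homeomorphism, it cannot be quoted directly from the $\T(S)$ action formula $x_\la(f^*m)=\I_{f,\la}(x_{f(\la)}(m))$. The limiting argument is the cleanest way to establish it, and the only bookkeeping it requires is to verify the length hypotheses of Theorem~\ref{shear} for both families and to invoke the continuity of $f^*$ on $\overline{\T(S)}$, both of which are immediate.
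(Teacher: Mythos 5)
Your proposal is correct and takes essentially the same route as the paper: approximate $m_\g$ by a continuous family $m_t\in\T(S)$, invoke continuity of the action, Theorem~\ref{shear} (twice, which is exactly the content of Proposition~\ref{behavior}), the exact commutativity of the second square of Diagram~(\ref{mcgaction}), and the action formula $x_\la(f^*m)=\I_{f,\la}(x_{f(\la)}(m))$, then compare limits; the second assertion is handled word-for-word as in the paper, via the isotopy of $f(\la)_\g$ and $\zeta(f)(\la_\g)$ from Theorem~\ref{tower}. The only (cosmetic, slightly cleaner) difference is that you isolate the identity $\I_{f,\la,\g}\bigl(y_{f(\la)_\g}(m_\g)\bigr)=y_{\la_{f^{-1}(\g)}}(f^*m_\g)$ and then apply $\Phi_{f(\la)_\g\la_\g}$ exactly on the image of $\T(S_\g)$, whereas the paper pushes the full composition $\Phi_{f,\la,\g}\circ\Theta_{\g,\la}$ through the limit, which tacitly uses continuity of the rational map $\Phi_{f,\la,\g}$ at $y_{\la_\g}(m_\g)$.
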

\begin{proof}
Let $m_\g\in\T(S_\g)$ and consider a continuous family $m_t\in\T(S)$, $t>0$, such that $m_t\to m_\g$ as $t\to 0$. Note that, by continuity of the action, we have that $f^*m_t\to f^* m_\g$.

By Theorem~\ref{shear}, we have $\Theta_{\g,\la}(x_\la(m_t))\to y_{\la_\g}(m_\g)$, therefore it suffices to show that $\Phi_{\la,f,\g}\circ\Theta_{\g,\la}(x_\la(m_t))\to y_{\la_{f^{-1}(\g)}}(f^*m_\g)$.

This, in turn, follows from the following equalities:
\begin{align*}
\lim_{t\to 0}\Phi_{\la,f,\g}\circ\Theta_{\g,\la}(x_\la(m_t))&=\lim_{t\to 0}\I_{f,\la,\g}\circ\Theta_{\g,f(\la)}\circ\Phi_{f(\la)\la}(x_\la(m_t))\\
            &=\lim_{t\to 0}\Theta_{f^{-1}(\g),\la}\circ\I_{f,\la}\circ\Phi_{f(\la)\la}(x_\la(m_t))\\
            &=\lim_{t\to 0}\Theta_{f^{-1}(\g),\la}(x_\la(f^*m_t))\\
            &=y_{\la_{f^{-1}(\g)}}(f^*m_\g).
\end{align*}
Here the first equality follows from Proposition~\ref{behavior}, the second from the commutativity of the second square in Diagram~(\ref{mcgaction}), the third from the expression of the mapping class group action in shear coordinates and the last from Proposition~\ref{behavior} again together with the continuity of the action.

If $f\in\MCG(S,\g)$, we saw in the proof of Theorem~\ref{tower} that $f(\la)_\g$ and $\zeta(f)(\la_\g)$ are isotopic. As such, we have $\Phi_{f(\la)_\g\la_\g}=\Phi_{\zeta(f)(\la_\g)\la_\g}$
and, since $f^{-1}(\g)=\g$, $\I_{f,\la,\g}=\I_{\zeta(f),\la_\g}$,
which proves the second part of the theorem.
\end{proof}

We describe some of the maps involved in this theorem in Appendix~\ref{example}, in the case of the once-punctured torus.


\appendix
\section{An example: the once punctured torus}
\label{example}
As an illustration, we write down explicitely some of the maps involved in Diagram~(\ref{mcgaction}) and check the results of Section~\ref{section 2} in the case of the once-punctured torus $S=S_{1,1}$. Consider the triangulation $\la$ of $S$ given in the upper left corner of Figure~\ref{torus1}, where the horizontal and vertical sides are identified as usual. in the middle is its image after a Dehn twist $f=T_\g$ along the curve $\g$ crossing $S$ vertically, and to the right is the original triangulation back. Below is the induced triangulation $\la_\g=f(\la)_\g=\la_{f^{-1}(\g)}$ of the thrice-punctured sphere $S_\g$. The starred punctures correspond to the ones obtained by pinching $\g$.

\begin{figure}[htb!]
\includegraphics{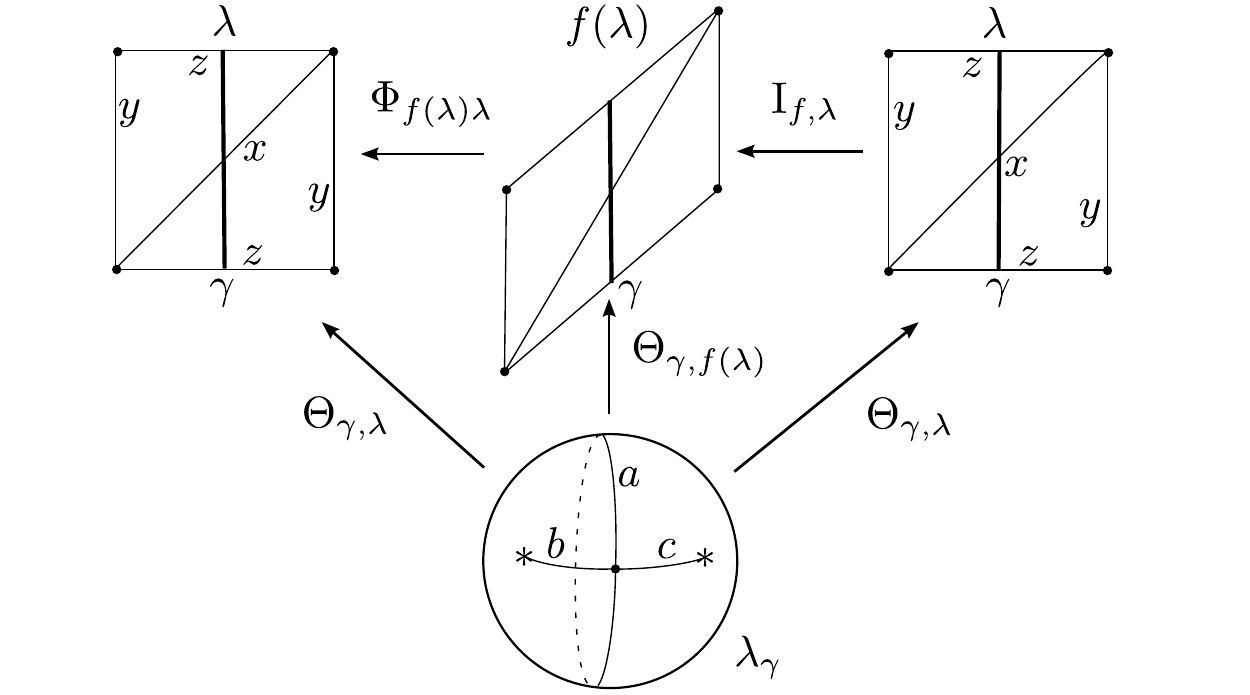}
\caption{\label{torus1} Pinching and twisting along the same curve}
\end{figure}

We write the maps involved in terms of rational functions between fields of fractions, and we refer once again to \cite{Liu1} for the formulas used here. Note, however, that all the maps involved have to go in the opposite direction. Using the variables associated to the edges as on Figure~\ref{torus1}, we obtain:
\begin{align*}
 \Phi_{f,\la}=\Phi_{f(\la)\la}\circ\I_{f,\la}\colon x&\mapsto z^{-1}           & \Theta_{\g,\la}\colon a &\mapsto y\\
                    y&\mapsto (1+z^2)y         &                       b &\mapsto xz\\
                    z&\mapsto (1+z^{-2})^{-1}x &                       c &\mapsto xz
\end{align*}

If we plug in a family of hyperbolic metrics $m_t$ with shear paremeters $(x_t,y_t,z_t)$ converging as $t\to 0$ to the pinched metric $m_\g$  with parameters $(a,b,c)$ (which are $(1,1,1)$ in the case of cusp ends), Theorem~\ref{shear} implies that $y_t\to a$ and $x_t z_t \to b=c$. In addition $z_t\to 0$ necessarily since $l_{m_t}(\g)\to 0$ (see Lemma~\ref{pattern} in Appendix~\ref{shearext}). As such, we indeed have $\lim_{t\to 0}\Phi_{f,\la}\circ\Theta_{\g,\la}(m_t)=\Theta_{\g,\la}(m_\g)$ in this case and $\Phi_{f,\la,\g}=\id$ as it should be. Notice, however, that the commutativity of the diagram is only achieved ``at infinity'' in this case.

If one pinches along the curve $\alpha$ going once horizontally however, the triangulations $\la_\alpha$ and $\la_{f^{-1}(\alpha)}$ are different, as shown in Figure~\ref{torus2}. In this picture the starred punctures correspond to the pinching of $\alpha$.

\begin{figure}[htb!]
\includegraphics{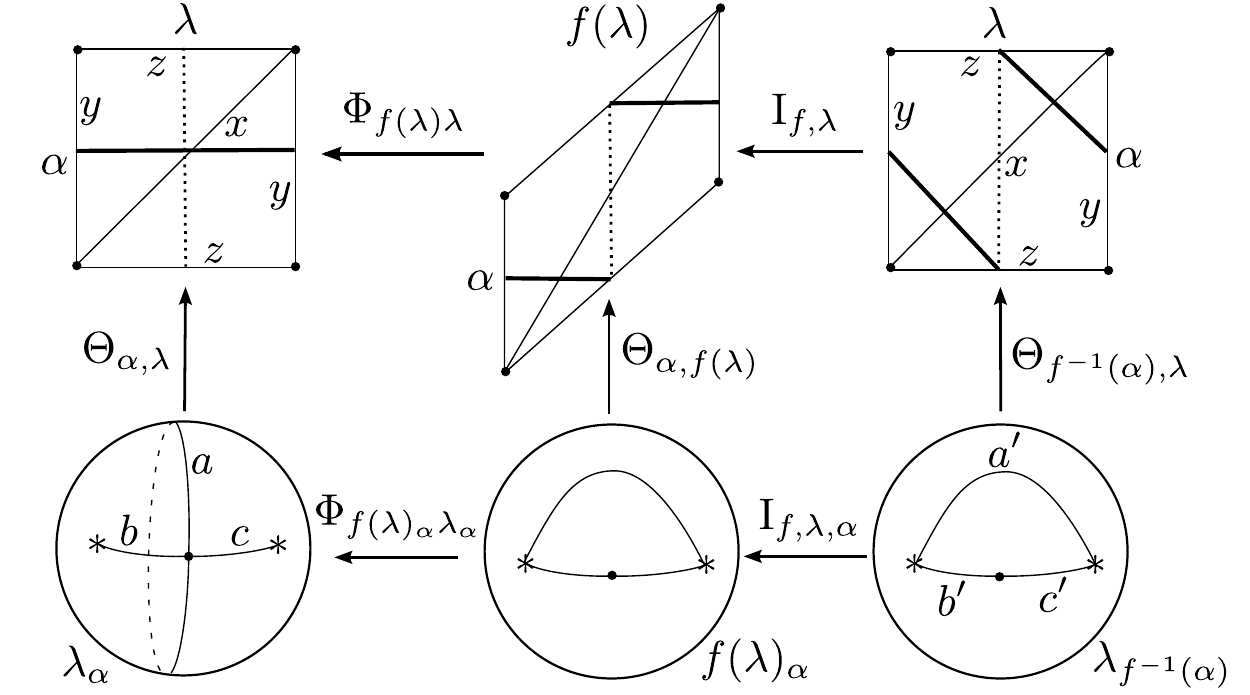}
\caption{\label{torus2} Pinching and twisting along intersecting curves}
\end{figure}

It is important to notice that, although the two pictures in the lower right corner are identical, they correspond to two different embeddings of the thrice-punctured sphere in $S$. There is still a natural identification of the edges of the induced triagulations, however, which gives the map $\I_{f,\la,\g}$. In this case, we have

\begin{align*}
 \Theta_{\alpha,\la}\colon a & \mapsto z & \Theta_{f^{-1}(\alpha),\la}\colon a' &\mapsto x       
 & \Phi_{f,\la,\alpha}\colon a'&\mapsto a^{-1}\\
                    b&\mapsto xy   &  b' &\mapsto xyz & b'&\mapsto ab\\
                    c & \mapsto xy & c'&\mapsto xyz & c'&\mapsto ac
\end{align*}

Note that here the diagram is in fact commutative:
\[\Phi_{f,\la}\circ\Theta_{f^{-1}(\alpha),\la}=\Theta_{\alpha,\la}\circ\Phi_{f,\la,\alpha}\]

\section{Extension of shear coordinates}
\label{shearext}

We give a proof of the reverse implication of Theorem~\ref{shear}. This is a direct consequence of  the following proposition. We recall that, for a multicurve $\g=\cup_i\g_i$, we let $\ell_{m_t}(\g)=\sum_i\ell_{m_t}(\g_i)$.

\begin{prop}
\label{topology}
Let $m_t\in\T(S)$, $t>0$, be a continuous family and $m_\g\in\T(S_\g)$ for some multicurve $\g$ be such that
\[\lim_{t\to 0}\ell_{m_t}(\g)=0\]
and
\[\lim_{t\to 0}\Theta_{\g,\la}(x_\la(m_t))=y_{\la_\g}(m_\g)\]
for a given triangulation $\la\in\Lambda(S)$.

Then, for any simple closed curve $\alpha$ on $S$ such that $\alpha\cap\g=\emptyset$, we have
\[\lim_{t\to 0}\ell_{m_t}(\alpha)=\ell_{m_\g}(\alpha)\]
\end{prop}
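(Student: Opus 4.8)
The plan is to compute both $\ell_{m_t}(\alpha)$ and $\ell_{m_\g}(\alpha)$ through the standard trace formula for geodesic lengths in shear coordinates and to show that the former converges to the latter. Recall (see \cite{Bo1,Liu1}) that if $\alpha$ is realized as a geodesic crossing the edges of a triangulation in cyclic order, its $\PSL(2,\R)$ holonomy is a product $M_\alpha$ of edge matrices $E(s)=\bigl(\begin{smallmatrix}0 & s^{1/2}\\ -s^{-1/2}& 0\end{smallmatrix}\bigr)$, one for each crossed edge carrying shear parameter $s$, alternating with the constant turning matrices $L=\bigl(\begin{smallmatrix}1 & 1\\ -1 & 0\end{smallmatrix}\bigr)$ and $R=\bigl(\begin{smallmatrix}1 & 0\\ 1 & 1\end{smallmatrix}\bigr)$ recording how $\alpha$ turns inside each triangle, and that $2\cosh(\ell_m(\alpha)/2)=|\mathrm{tr}\,M_\alpha|$. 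The same formula holds on $S_\g$ using $\la_\g$ and $y_{\la_\g}$; write $N_\alpha$ for the corresponding product. Since the trace is continuous, the proposition reduces to the convergence $\mathrm{tr}\,M_\alpha(x_\la(m_t))\to \mathrm{tr}\,N_\alpha(y_{\la_\g}(m_\g))$.

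First I would dispose of the degenerate cases. If $\alpha$ bounds a disk or is peripheral in $S_\g$ then $\ell_{m_\g}(\alpha)=0$; such an $\alpha$ is, viewed in $S$, null-homotopic, peripheral, or isotopic to a component of $\g$, so in every case $\ell_{m_t}(\alpha)\to 0$ as well, the last case using the hypothesis $\ell_{m_t}(\g)\to 0$. This leaves the case where $\alpha$ is essential in $S_\g$, where $\ell_{m_\g}(\alpha)$ is genuinely computed by $N_\alpha$.

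Next I would match the two holonomy words. Since $\alpha\cap\g=\emptyset$, I isotope $\alpha$ into $S_\g$ and realize it in $S$ avoiding the pinching annuli, so that its crossings with $\la$ organize according to its crossings with $\la_\g$. By Remark~\ref{triangles} the triangles of $\la_\g$ are triangles of $\la$, so the turning matrices occurring between consecutive crossings of $\la_\g$ are literally the turning matrices of $N_\alpha$. Each edge $f$ of $\la_\g$ that $\alpha$ meets is a band of parallel segments of edges $e_1,\dots,e_r$ of $\la$ separated by the bigons that collapse in passing from $S$ to $S_\g$; crossing $f$ once corresponds in $S$ to traversing the whole band, contributing a subword $B_f(t)=E(x_\la(m_t)(e_1))\,T_1\cdots T_{r-1}\,E(x_\la(m_t)(e_r))$ of $M_\alpha(x_\la(m_t))$, where the $T_i$ are the turning matrices of the collapsing bigons. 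Thus $M_\alpha(x_\la(m_t))$ is an alternating product of the fixed turning matrices of $N_\alpha$ with the band subwords $B_f(t)$, placing it in bijective correspondence with the edge matrices of $N_\alpha$.

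The crux, and the step I expect to be the main obstacle, is to show that each band subword satisfies $B_f(t)\to \pm E\bigl(y_{\la_\g}(m_\g)(f)\bigr)$ as $t\to 0$. The product of shear parameters along the band equals $\prod_{e}x_\la(m_t)(e)^{k(e,f)}=\Theta_{\g,\la}(x_\la(m_t))(f)$, which by hypothesis converges to $y_{\la_\g}(m_\g)(f)$; so the real content is that the intervening turning matrices $T_i$ must disappear in the limit. This is where the pinching asymptotics of Lemma~\ref{pattern} enter: the shear parameters of the segments crossing $\g$ tend to $0$ while those of their partners blow up, at rates for which an elementary identity of the form $E(s)\,T\,E(u)=\pm E(su)+O(\sqrt{u/s})$ forces the error terms to vanish, so that the band telescopes to a single edge matrix with parameter $\Theta_{\g,\la}(x_\la(m_t))(f)$. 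Carrying this telescoping through a band of arbitrary length by induction, and collecting the signs into one irrelevant global sign (harmless since lengths are read off $|\mathrm{tr}|$), yields $M_\alpha(x_\la(m_t))\to \pm N_\alpha(y_{\la_\g}(m_\g))$ entrywise, hence the desired convergence of traces and of lengths.
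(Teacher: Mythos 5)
Your overall strategy is the paper's: both proofs replace lengths by traces, write the trace as a product of shear-parameter matrices organized along the crossings of $\alpha$ with $\la$, match the turning data with that of $\la_\g$ via Remark~\ref{triangles}, cut the word into bands of edges of $\la$ collapsing onto each edge of $\la_\g$, and invoke Lemma~\ref{pattern} to make the collapsing bigons disappear in the limit. (The paper phrases the matrix product as a ``domino diagram'' and computes the trace as a sum of monomials over admissible paths, but this is the same formalism.) Your dispatch of the degenerate curves and the identification of band subwords with the edge matrices of $N_\alpha$ are fine.

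The gap is in your crux step, and it is a real one. The assertion that ``the shear parameters of the segments crossing $\g$ tend to $0$ while those of their partners blow up'' does not follow from the hypotheses and is false in general: what Lemma~\ref{pattern} controls are \emph{shearing cocycles}, i.e.\ sums of log-shears along a sub-arc of $\g$ whose first and last turns form a left-right (resp.\ right-left) pattern, and it says nothing about an edge sitting between two turns in the \emph{same} direction. For example, in a band whose internal turning word is $LLR$, the lemma forces $x_1\to 0$, $x_3\to\infty$, $x_2x_3\to\infty$, but places no constraint whatsoever on the middle parameter $x_2$, which may oscillate. Consequently the pairwise ratio conditions your identity $E(s)TE(u)=\pm E(su)+O(\sqrt{u/s})$ requires are simply not available, and a left-to-right induction does not close up: at step $k$ you would need to compare the accumulated product $x_1\cdots x_k$ with the single parameter $x_{k+1}$, which is not one of the cocycles the lemma controls. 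Moreover, even when one collapse is legitimate, its error is an off-diagonal entry which the remaining matrices of the band can multiply by divergent factors, so vanishing of each pairwise error does not give vanishing of the total error. The paper's proof avoids both problems by never collapsing pairwise: expanding the trace as a sum over paths, every ``mixed'' monomial (one that dips from the top row to the bottom row inside a band) carries a factor $e^{-\sigma_{m_t}(P,Q;\kappa_\g)}$ where $\kappa_\g$ is an arc of $\g$ running parallel to $\alpha$ along the whole dip; the transition rules of the dominoes force any dip to start at a left turn and end at a right turn, which is exactly the left-right pattern needed for Lemma~\ref{pattern} to kill that factor. Two further points your sketch would still need to address are the dips that run to the end of a band, where the closing right turn must be supplied by the divergence of $\g$ from $\alpha$ there, and the paper's device of adjoining to $\g$ curves encircling the punctures of $S$ so that every band is bounded on both sides by arcs of $\g$.
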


In order to prove this proposition we will need to introduce a couple of definitions and state an intermediate lemma.

Shear parameters are defined in terms of edges or, equivalently, in terms of pairs of adjacent triangles. This can be extended naturally to any pair of triangles as follows \cite{Bo1}: Fix a hyperbolic metric $m\in\T(S)$ and an ideal triangulation $\la$ of $S$ realized by geodesic arcs for $m$. Let $P$ and $Q$ be two triangles for $\la$ and let $\kappa$ be a simple (geodesic) arc from a point in $P$ to one in $Q$. The \emph{shearing cocycle} $\s_m(P,Q;\kappa)$ for $m$ associated to the triple $(P,Q;\kappa)$ is given by the sum of the logarithmic shear parameters associated to the edges crossed by $\kappa$ counted with multiplicity. This quantity is of course invariant under homotopy of $\kappa$ respecting the crossing pattern with $\la$.

We now describe how shearing cocycles diverge under pinching of a simple closed curve $\g$. Suppose that $\g$ crosses two triangles $P$ and $Q$ (not necessarily consecutively) and let $\kappa_\g$ be a segment of $\g$ going from $P$ to $Q$. We fix the ends of $\kappa_\g$ on the sides where $\g$ enters the first triangle and exits the second (for either orientation). Let $\wt{\kappa}_\g$ be a lift of $\kappa_\g$ to the universal cover connecting lifts $\wt{P}$, $\wt{Q}$ of $P$ and $Q$. We say that  $\kappa_\g$ \emph{separates} $P$ and $Q$ if their lifts are in one of the configurations of Figure~\ref{zigzag}. The two configurations differ by the pattern in which $\wt{\kappa}_\g$ crosses $\wt{P}$ and $\wt{Q}$. In the first case, for an arbitrary orientation, $\wt{\kappa}_\g$ does a left turn when crossing the first triangle and a right turn when crossing the second one, while in the second case the curve does a right turn first followed by a left turn. We say that $\kappa_\g$ crosses $P$ and $Q$ in a \emph{left-right} and \emph{right-left pattern} respectively.

\begin{figure}[htb!]
\includegraphics{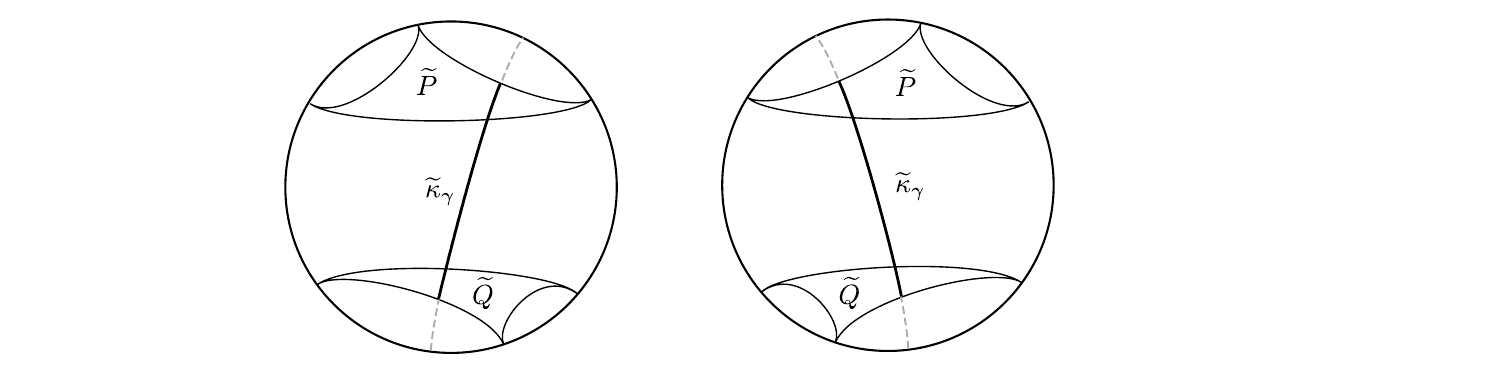}
\caption{\label{zigzag}a left-right and right-left pattern}
\end{figure}

The following lemma is an easy exercise in hyperbolic geometry. Note that we do not assume convergence in the augmented Teichm\"uller space in the statement.
\begin{lem}
\label{pattern}
With the notations above, suppose that $\kappa_\g$ separates the two triangles $P$ and $Q$ and let $m_t$ be a continuous family of hyperbolic metrics on $S$. Then, if $\ell_{m_t}(\g)\to 0$ as $t\to 0$, we have
\begin{itemize}
\item $\sigma_{m_t}(P,Q;\kappa_\g)\to+\infty$ if $\kappa_\g$ crosses the triangles in a left-right pattern;
\item $\sigma_{m_t}(P,Q;\kappa_\g)\to-\infty$ if $\kappa_\g$ crosses the triangles in a right-left pattern.
\end{itemize}
\end{lem}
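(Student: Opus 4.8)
The plan is to work entirely in the universal cover and reduce the statement to the elementary fact that, under pinching, the shearing cocycle accumulates a signed displacement of the order of the width of the collar about $\g$, which diverges.

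First I would lift everything to $\HH$. Realize $\la$ by geodesics for $m_t$ and let $\wt{\g}$ be the lift of $\g$ to the axis of the holonomy $\rho_t(\g)$, normalized so that its endpoints are $0$ and $\infty$ and $\rho_t(\g)=\mathrm{diag}(e^{\ell_{m_t}(\g)/2},e^{-\ell_{m_t}(\g)/2})$. Lift $\kappa_\g$ to the segment $\wt{\kappa}_\g$ of the imaginary axis running from $\wt{P}$ to $\wt{Q}$. By definition $\s_{m_t}(P,Q;\kappa_\g)$ is the sum of the logarithmic shear parameters of the edges $\wt{e}_1,\dots,\wt{e}_n$ crossed by $\wt{\kappa}_\g$, and each such parameter is the signed distance along $\wt{e}_j$ between the orthogonal projections of the two opposite ideal vertices of the triangles adjacent to $\wt{e}_j$, i.e. the logarithm of a cross-ratio of the four ideal endpoints of those two triangles. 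The turning data recorded by the left--right and right--left patterns of Figure~\ref{zigzag} is exactly the information fixing the cyclic order, on $\partial\HH$, of the distinguished (pivot) ideal vertices of $\wt{P}$ and $\wt{Q}$ relative to the fixed points $0,\infty$ of $\rho_t(\g)$.

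The geometric input is the collar lemma: since $\ell_{m_t}(\g)\to 0$, the maximal embedded collar about $\g$ has half-width $w_t$ with $\sinh w_t=1/\sinh(\ell_{m_t}(\g)/2)\to\infty$. Every edge of $\la$ meeting $\g$ must cross this collar, so in the above normalization the ideal endpoints of the crossed edges $\wt{e}_j$ are forced toward the two fixed points $0$ and $\infty$; quantitatively the crossed edges become nearly radial and the relevant cross-ratios degenerate at a rate controlled by $w_t$, so that $|\s_{m_t}(P,Q;\kappa_\g)|$ grows like $w_t\to\infty$. The separation hypothesis guarantees that these contributions degenerate with a common sign rather than cancelling: in the left--right configuration the pivot vertex of $\wt{P}$ lies on one side of $\wt{\g}$ and that of $\wt{Q}$ on the other, so that, with the convention that shear is measured positively to the left, the cocycle eventually increases and $\s_{m_t}(P,Q;\kappa_\g)\to+\infty$, while the right--left configuration is the mirror image and yields $-\infty$.

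The main obstacle I anticipate is the sign bookkeeping. One must verify that the two patterns genuinely produce opposite signs and that these match the stated convention, and---crucially, since we assume only $\ell_{m_t}(\g)\to 0$ and \emph{not} convergence in $\overline{\T(S)}$---that the finitely many intermediate, non-degenerating shear contributions remain bounded and cannot overwhelm the collar-driven divergence. I expect this to reduce, after normalizing the two outermost crossed edges so that their relevant endpoints sit at $0$ and $\infty$, to the monotonicity of a single cross-ratio in the collar width $w_t$, which is precisely where the left/right asymmetry of Figure~\ref{zigzag} enters.
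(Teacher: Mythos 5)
The paper itself gives no proof of this lemma (it is dismissed as ``an easy exercise in hyperbolic geometry''), so your write-up has to stand on its own; unfortunately its geometric engine is incorrect. The collar lemma does not force the ideal endpoints of the crossed edges toward the fixed points $0$ and $\infty$ of $\rho_t(\g)$. In the universal cover the lifted collar is the wedge of points within distance $w_t$ of the imaginary axis, and \emph{every} geodesic that crosses the axis automatically traverses this wedge, whatever its endpoints are; so ``$\wt{e}_j$ crosses the collar'' carries no information about where the endpoints of $\wt{e}_j$ sit. In fact near-radiality is simply false: on the once-punctured torus, pinch $\g$ while choosing the Fenchel--Nielsen twist for each $t$ so that the edge of $\la$ meeting $\g$ once crosses it orthogonally; then a lift of that edge is, in your normalization, the unit semicircle for every $t$, nowhere near radial, even though $\ell_{m_t}(\g)\to 0$. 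The crossing angle is governed by the twist, which the hypothesis $\ell_{m_t}(\g)\to 0$ does not constrain. What actually degenerates is different: since $\rho_t(\g)\to\mathrm{Id}$ and the lifts of the edges of $\la$ are disjoint and $\rho_t(\g)$-equivariant, the left endpoints of all edges crossed within one period of $\g$ are trapped in an interval $[e^{\ell_t}a,a]$ of the negative axis, and similarly on the right; it is these collisions between endpoints of \emph{distinct} crossed edges (equivalently, the collapse of the crossed triangles onto a single geodesic), not migration toward $0$ and $\infty$, that make the shear cross-ratios blow up, at rate $2\log(1/\ell_t)$, comparable to $w_t$.

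There is a second gap: your fallback claim that ``the finitely many intermediate, non-degenerating shear contributions remain bounded'' is also false, so no ``divergent main term plus bounded error'' argument can close the proof even after the mechanism is repaired. On the punctured torus with $\g$ crossing the edges $y$ and $z$, the trace condition $2\cosh(\ell_{m_t}(\g)/2)\to 2$ forces $y_tz_t\to 1$ together with $\log y_t\to-\infty$ and $\log z_t\to+\infty$ (or vice versa); for a segment $\kappa_\g$ crossing $z$, $y$, $z$ in a left-right configuration, the middle contribution diverges with the wrong sign, and the cocycle diverges only because the terms cancel in pairs, $\log(y_tz_t)\to 0$. Any correct proof must exploit this grouped cancellation. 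The cleanest route is the one implicit in the paper's Appendix~\ref{shearext}: write $2\cosh(\ell_{m_t}(\g)/2)$ as the sum of positive monomials over admissible paths in the domino diagram of $\g$; the top and bottom paths contribute $A_t+A_t^{-1}\geq 2$, so every other monomial tends to $0$ and $A_t\to 1$; the single path that drops to the bottom row at the left turn in $P$ and returns to the top row at the right turn in $Q$ contributes exactly $A_t\,e^{-\s_{m_t}(P,Q;\kappa_\g)}$, whence $\s_{m_t}(P,Q;\kappa_\g)\to+\infty$, with the mirror argument in the right-left case. Your cross-ratio framework can also be completed, but only via the endpoint-collision mechanism above together with careful bookkeeping of how the left-right pattern distributes the colliding pairs between numerators and denominators---not via the collar and near-radiality.
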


To prove the proposition we will instead show the convergence of the trace functions $T_m(\alpha)=2\cosh(\ell_m(\alpha)/2)$. We start by introducing a convenient way of describing trace functions in terms of shear coordinates \cite{francis}. Fix an ideal triangulation $\lambda$ of $S$ and let $\alpha$ be a simple closed curve which intersects $\lambda$ transversely and does not cross any edge twice in a row. We endow $\alpha$ with an arbitrary orientation. Then, in each triangle crossed, $\alpha$ does either a left turn or a right turn. If $x$ is the shear parameter associated to the edge through which $\alpha$ enters the triangle, we consider the following two elementary labeled diagrams $L_x$ and $R_x$, depending on whether $\alpha$ turns to the left or to the right.
\begin{align*}
L_x:\ \vcenter{\xymatrix{{x^{1/2}}\ar[r]\ar[dr]&{*}\\
				{x^{-1/2}}\ar[r]&{*}}}
				&\hspace{2cm} R_x:\ 
				\vcenter{\xymatrix{{x^{1/2}}\ar[r]&{*}\\
				{x^{-1/2}}\ar[r]\ar[ur]&{*}}}
\end{align*}
We call such diagrams \emph{dominoes}.

Picking a basepoint on $\alpha$ inside one of the triangles of $\la$, we go along the curve once for the given orientation and paste together the dominoes to obtain a diagram of the following form:

\begin{align*}
D(\alpha,\la):\ \vcenter{\xymatrix{{x^{1/2}_{i_1}}\ar[r]\ar[dr]&{x^{1/2}_{i_2}}\ar[r]&{\cdots}\ar[r]&{x^{1/2}_{i_n}}\ar[r]&{x^{1/2}_{i_1}}\\				{x^{-1/2}_{i_1}}\ar[r]&{x^{-1/2}_{i_2}}\ar[ur]\ar[r]&{\cdots}\ar[r]\ar[ur]&{x^{-1/2}_{i_n}}\ar[r]\ar[ur]&{x^{-1/2}_{i_1}}
}}
\end{align*}
where we identify the first and last columns. We call such a diagram $D(\alpha,\la)$ a \emph{domino diagram}. An \emph{admissible path} in the diagram is a sequence of consecutive edges which starts and ends at the same point in the first and last row. To each such path we associate the monomial given by taking the product of labelings along the path. Using the description of the monodromy representation associated to $m$ in terms of left and right matrices as in \cite{BoWo} Lemma~3, it follows readily that the trace $T_m(\alpha)$ is given by taking the sum of the monomials associated to all the admissible paths in $D(\alpha,\la)$.

\begin{proof}[Proof of Proposition~\ref{topology}]
We want to understand how the domino diagram  $D(\alpha,\la)$ reduces to the diagram $D(\alpha,\la_\g)$ when $t\to 0$. In fact, it suffices to consider the subdiagrams $D^i_\g$ of $D(\alpha,\la_\g)$ associated to two consecutive triangles separated by an edge $\mu_i$ and the corresponding subdiagram $D^i$ of $D(\alpha,\la)$ given by the triangles separated by the edges $\la_{j_1},\ldots,\la_{j_n}$ of $\la$ which collapse onto $\mu_i$.
\begin{align*}
D^i:\ \vcenter{\xymatrix{{}\ar@{.>}[r]\ar@{.>}[dr]&{x^{1/2}_{j_1}}\ar[r]&{\cdots}\ar[r]&{x^{1/2}_{j_n}}\ar@{.>}[r]&{}\\				{}\ar@{.>}[r]&{x^{-1/2}_{j_1}}\ar[ur]\ar[r]&{\cdots}\ar[r]\ar[ur]&{x^{-1/2}_{j_n}}\ar@{.>}[r]\ar@{.>}[ur]&{}}}&\ \mathop{\longrightarrow}^{t\to 0}_?\ D^i_\g:\ \vcenter{\xymatrix{{}\ar@{.>}[r]\ar@{.>}[dr]&{y^{1/2}_i}\ar@{.>}[r]&{}\\				{}\ar@{.>}[r]&{y^{-1/2}_i}\ar@{.>}[r]\ar@{.>}[ur]&{}}}
\end{align*}
First notice we can assume that $\g$ contains a simple closed curve around each of the punctures of $S$. By assumption, the length of each of these components is $0$ for $m_t$ so that the condition $l_{m_t}(\g)\to 0$ is preserved. In this case, the two triangles which are adjacent to $\mu_i$ correspond to two triangles in $\la$ which are crossed by components of $\g$ coming from different sides of one of them, staying parallel for a while then diverging again in the second triangle. the curve $\alpha$ stays between the segments of $\g$ where they are parallel and necessarily diverges from at least one of its components at the beginning and at the end. A possible configuration is illustrated in Figure~\ref{div}.
\begin{figure}[htb!]
\includegraphics{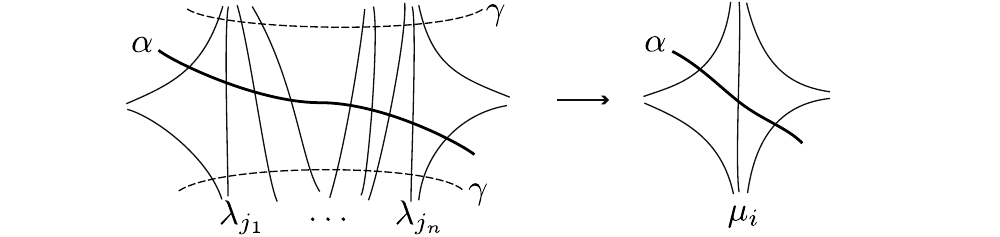}
\caption{\label{div}}
\end{figure}

What we have to show is that, among all the paths in $D^i$ between the first and last columns, the associated monomials always converge to 0, except for the top and bottom paths where they converge to the corresponding labelings of $D^i_\g$. Applying this fact at each edge of $\la_\g$ crossed by $\alpha$, one can easily see that monomials associated to admissible paths for $D(\alpha,\la)$ converge to the corresponding monomials for $D(\alpha,\la_\g)$.

This in turn follows from Lemma~\ref{pattern}. Notice first that the product of labels associated to the top path is exactly $z_t:=[\Theta_{\g,\la}(x_\lambda(m_t))(\mu_i)]^{1/2}$ which converges to $y^{1/2}_i$ by assumption. Similarly, the bottom path corresponds to the term $z^{-1}_t$ and converges to $y^{-1/2}_i$. A general path in $D^i$ which starts on the top row then corresponds to the product of $z_t$ with terms of the form $e^{-\sigma(P_{j_k},P_{j_l};\kappa_\g)}$ for each of the segments of the path which are in the bottom part.
\begin{gather*}
z_t\\
\xymatrix{{x^{1/2}_{j_1}}\ar[r]&{\cdots}\ar[r]&\ar@{.>}[r]\ar[dr]&{x^{1/2}_{j_k}}\ar@{.>}[r]&{\cdots}\ar@{.>}[r]&{x^{1/2}_{j_l}}\ar@{.>}[r]&{\cdots}\ar[r]&{x^{1/2}_{j_n}}\\
{x^{-1/2}_{j_1}}\ar@{.>}[r]&{\cdots}\ar@{.>}[r]&\ar@{.>}[r]&{x^{-1/2}_{j_k}}\ar[r]&{\cdots}\ar[r]&{x^{-1/2}_{j_l}}\ar[ur]\ar@{.>}[r]&{\cdots}\ar@{.>}[r]&{x^{-1/2}_{j_n}}
\save "1,1"."1,8"*[F]\frm{}\restore
\save "2,4"."2,6"*[F]\frm{}\restore
}\\
\ \ \ \ \ \ e^{-1/2\s_{m_t}(P_{j_k},P_{j_l};\kappa_\g)}
\end{gather*}
\begin{figure}[htb!]
\includegraphics{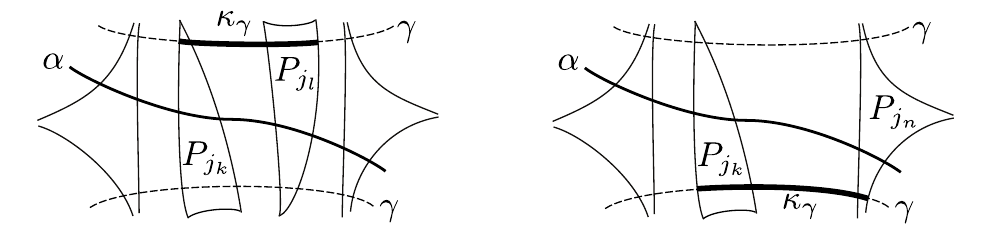}
\caption{\label{divbis}}
\end{figure}
Such a segment of path necessarily starts right after a left domino $L_{x_{j_{k-1}}}$ and ends at a right domino $R_{x_{j_l}}$ or at the bottom entry of the last column of the diagram. Since $\alpha$ and $\gamma$ are parallel between the two triangles, in the case that $P_{j_l}$ is not the last triangle, this implies that the corresponding arc $\kappa_\g$ crosses the triangles in a left-right pattern, as on the left of Figure~\ref{divbis}. In the case that $l=n$, one of the segments of $\gamma$ involved necessarily diverges from $\alpha$, so that $\g$ must contain an arc $\kappa_\g$ crossing the triangles in a left-right pattern. As such, in either case, the monomial associated to a path starting in the top row converges to 0 unless it stays in the top row in which case it converges to $y^{1/2}_i$. Similarly, a path starting in the bottom row will correspond to the product of $z^{-1}_t$ with terms of the form $e^{\sigma(P_{j_k},P_{j_l};\kappa_\g)}$ where $\kappa_\g$ now crosses the triangles in a right-left 
pattern, hence the corresponding monomial converges to 0 unless the path stays in the bottom row in which case the monomial converges to $y^{-1/2}_i$.
\end{proof}


\begin{thebibliography}{xxxxx}

\bibitem[Ab76]{Abi1} W. Abikoff, \emph{Augmented Teichm\"uller spaces},  Bull. Amer. Math. Soc.  82  (1976), no. 2, 333--334.

\bibitem[Ab77]{Abi2} \bysame, \emph{Degenerating families of Riemann surfaces},  Ann. of Math. (2)  105  (1977), no. 1, 29--44.

\bibitem[BBL07]{BBL} H. Bai, F. Bonahon, X. Liu, \emph{Local representations of the quantum Teichm\"uller space}, Preprint, 2007, {\tt arXiv:0707.2151}.

\bibitem[Be74]{Ber1} L. Bers, \emph{Spaces of degenerating Riemann surfaces},  Discontinuous groups and Riemann surfaces (Proc. Conf., Univ. Maryland, College Park, Md., 1973),  pp. 43--55. Ann. of Math. Studies, No. 79, Princeton Univ. Press, Princeton, N.J., 1974.

\bibitem[Bo96]{Bo1} F. Bonahon, \emph{Shearing hyperbolic surfaces, bending pleated surfaces and Thurston's symplectic form}, Ann. Fac. Sci. Toulouse Math. (6)  5  (1996),  no. 2, 233--297.

\bibitem[Bo09]{francis} F. Bonahon, private communication.

\bibitem[BW11]{BoWo} F. Bonahon, H. Wong, \emph{Quantum traces for representations of surface groups in $SL2(\C)$}, Geom. Topol. 15 (2011), no. 3, 1569–1615.

\bibitem[FM12]{mcg} B. Farb, D. Margalit, \emph{A primer on mapping class groups}, Princeton Mathematical Series, 49. Princeton University Press, Princeton, NJ, 2012.

\bibitem[Fo97]{fo1} V. V. Fock, \emph{Dual Teichm\"uller space}, Preprint, 1997, {\tt arXiv:dg-ga/9702018}.

\bibitem[Ha86]{Har1} J. Harer, \emph{The virtual cohomological dimension of the mapping class group of an orientable surface
}, Invent. Math. 84 (1986), no. 1, 157–176.

\bibitem[JR03]{JaRu} W. Jaco, J. Rubinstein, \emph{0--efficient triangulations of 3--manifolds}, J. Differential Geom. 65 (2003), no. 1, 61–168.

\bibitem[Li09]{Liu1} X. Liu, \emph{The quantum Teichm\"uller space as a noncommutative algebraic object},  J. Knot Theory Ramifications  18  (2009),  no. 5, 705--726.

\bibitem[Ma76]{Masur1} H. Masur, \emph{Extension of the Weil-Petersson metric to the boundary of Teichm\"uller space}, Duke Math. J. 43 (1976), no. 3, 623–635.

\bibitem[Mo83]{Mo1} L. Mosher, \emph{Pseudo-Anosovs on punctured surfaces}, Dissertation, Princeton Univ., 1983.

\bibitem[Pe87]{Penner1} R. C. Penner, \emph{The decorated Teichm\"uller space of punctured surfaces}, Comm. Math. Phys.  113  (1987),  no. 2, 299--339.

\bibitem[Ro09]{moi} J. Roger, \emph{Factorization rules in quantum Teichm\"uller theory}, {\tt arXiv:0911.2510}, to appear in Algebr. Geom. Topol. .

\bibitem[Ro13]{moi2} J. Roger, \emph{The quantum Teichm\"uller bundle and the Deligne-Mumford compactification of moduli space}, in preparation.

\bibitem[Sa11]{Saric} D. \v Sari\'c, \emph{Zygmund vector fields, Hilbert transform and Fourier coefficients in shear coordinates}, {\tt  arXiv:1103.6249}, to appear in Amer. J. Math.

\bibitem[Th98]{Thurston} W. P. Thurston, \emph{Minimal stretch maps between hyperbolic surfaces}, preprint, 1986, {\tt arXiv:math/9801039}.

\end{thebibliography}
\end{document}